\newtheorem{thm}{Theorem}
\newtheorem{lem}[thm]{Lemma}
\newtheorem{prop}[thm]{Proposition}
\newtheorem{cor}[thm]{Corollary}
\theoremstyle{definition}
\theoremstyle{remark}
\newtheorem*{note}{Note}
\newcommand{\Lie}{\operatorname{\mathrm{Lie}}}
\newcommand{\sym}[1]{\mathfrak{S}_{#1}}
\newcommand{\Ind}{\operatorname{\mathrm{Ind}}}
\newcommand{\F}{\mathbb{F}}
\newcommand{\Ext}{\operatorname{Ext}}
\newcommand{\E}{\mathcal{E}}
\newcommand{\cO}{\mathcal{O}}
\newcommand{\ul}{\underline}
\begin{document}
\title{The complexity of the Lie module}

\author{Karin Erdmann}
\address[K. Erdmann]{Mathematical Institute, University of Oxford, Oxford, OX1 3LB, United Kingdom.}
\email[K. Erdmann]{erdmann@maths.ox.ac.uk}

\author{Kay Jin Lim}
\author{Kai Meng Tan}
\address[K. J. Lim and K. M. Tan]{Department of Mathematics, National University of Singapore, Block S17, 10 Lower Kent Ridge Road, Singapore 119076.}
\email[K. J. Lim]{matlkj@nus.edu.sg}
\email[K. M. Tan]{tankm@nus.edu.sg}

\date{August 2011}
\thanks{2010 {\em Mathematics Subject Classification}. 20C30.}
\thanks{Supported by EPSRC grant EP/G025487/1 and Singapore Ministry of Education Academic Research Fund R-146-000-135-112.}

\begin{abstract}
We show that the complexity of the Lie module $\Lie(n)$ in characteristic $p$ is bounded above by $m$ where $p^m$ is the largest $p$-power dividing $n$ and, if $n$ is not a $p$-power, is equal to the maximum of the complexities of $\Lie(p^i)$ for $1 \leq i \leq m$.
\end{abstract}

\maketitle

\section{Introduction}

The Lie module of the symmetric group $\sym{n}$ appears in many contexts; in particular it is closely related to the free Lie algebra. Here we take it to be
the left ideal of $\F\sym{n}$ generated by the
`Dynkin-Specht-Wever' element
$$\omega_n = (1-d_2)(1-d_3)\cdots(1-d_n)
$$
where $d_i$ is the $i$-cycle $(i, i-1, \ldots,1)$ and we compose the elements of $\sym{n}$ from right to left. We
write $\Lie(n)= \F\sym{n}\omega_n$ for this module, and we assume $\F$ is an algebraically closed field
of characteristic $p$.

One motivation comes from the work of Selick and Wu \cite{SW}. They reduce
the problem of finding  natural homotopy decompositions of the loop suspension of a $p$-torsion suspension to an algebraic question, and
in this context it is important to know a maximal projective submodule
of $\Lie(n)$ when the field has characteristic $p$.
The Lie module also occurs naturally as homology of configuration spaces, and
in other contexts.
Moreover the representation theory of symmetric groups
over prime characteristic is difficult and many basic questions are
open; naturally occurring representations are therefore of interest and may give new understanding.

In this paper we study homological invariants.  More precisely, we
provide upper bounds for the complexity of $\Lie(n)$.
The complexity of a module may be defined to be
the rate of growth of dimensions in its minimal projective resolution.
A module for the group algebra of a finite group has a
cohomological
variety defined via group cohomology, now known as support variety.
Its  dimension
equals the complexity of the module. The computation of  this variety
can be reduced to the case of maximal elementary Abelian $p$-subgroups.
Modules for elementary Abelian $p$-groups also have
a rank variety which in principle is very explicit, and the
support variety is homeomorphic to the rank variety. Details and references
may be found in Chapter 5 of \cite{B}.
Our results are obtained via this route, that is, we study the action
of maximal elementary Abelian $p$-groups on $\Lie(n)$.

A main result of \cite{BS} provides a
decomposition theorem for the  homogeneous parts $L^n(V)$
of the  free Lie algebra on a vector space $V$ over $\F$.
It shows that its module structure, for arbitrary $r$, can be reduced to the
cases when $n=p^m$ for some $m\geq 1$.
We make use of this theorem.  By work in \cite{LT} it may be transferred
to the context of symmetric groups.
Our main results (Theorem \ref{T:ineq} and \ref{T:p-power}) show that the complexity of $\Lie(n)$ is bounded above
by $m$ where $p^m$ is the largest $p$-power dividing $n$ and, if $n$ is not a $p$-power, is equal to the maximum of the complexities of $\Lie(p^i)$ with $1\leq i\leq m$.  We conjecture our upper bound is in fact an equality, and show that this conjecture is equivalent to the assertion that the complexity of $\Lie(p^m)$ as an $\F\E_m$-module is $m$, where $\E_m$ is a regular elementary Abelian subgroup of $\sym{p^m}$ of order $p^m$.

Computer calculations in \cite{D-al} suggest that
the problem of determining the module structure of $\Lie(p^n)$ explicitly is very hard, but
understanding its rank variety, and complexity, may  help.  These computations can be used to obtain the complexities of $\Lie(8)$ and $\Lie(9)$ in characteristic $2$ and $3$ respectively, and they provide some evidence in support of our conjecture.

The paper is organised as follows:  we give a summary of the background theory in the next section and prove some preliminary results. These include
a result (Proposition \ref{P:}) on the complexity of certain modules
for some wreath products in general.  We prove the main results in Section \ref{S:main}, and conclude the paper with some examples in Section \ref{S:eg}.

\section{Preliminaries}

In this section, we provide the necessary background theory that we require and prove some preliminary results.

Throughout, $\F$ denotes an algebraically closed field of characteristic $p$.

\subsection{Complexities and cohomological varieties of modules}

Let $G$ be a finite group.  Denote by $V_G$ the affine variety defined by the maximum ideal spectrum of the cohomology ring $H^\cdot(G,\F) = \Ext^{\cdot}_{\F G} (\F, \F)$.
Given a finitely generated $\F G$-module $M$, its cohomological variety $V_G(M)$ is defined to the subvariety of $V_G$ consisting of maximal ideals of $H^\cdot(G,\F)$ containing the annihilator of $\Ext^*_{\F G}(M,M)$ (thus $V_G(\F) = V_G$).  The complexity of $M$, denoted by $c_G(M)$, is equal to the
(Krull) dimension of $V_G(M)$.

Let $H$ be a subgroup of $G$.  We write $V_H(M)$ and $c_H(M)$ for the cohomological variety and complexity of $M$ as a $\F H$-module.

We collate together some results relating to complexities and varieties of module which we shall require:

\begin{thm} \label{T:cplx}
Let $G$ be a finite group, and let $M$ be a finitely generated $\F G$-module.
\begin{enumerate}
\item $c_G(M) = 0$ if and only if $M$ is projective.
\item $c_G(M) = \max_E \{ c_E(M) \}$ where $E$ runs over representatives of conjugacy classes of maximal elementary Abelian $p$-subgroups of $G$.
\item If $H$ is a subgroup of $G$, then $c_G(\Ind_H^G (M)) = c_H(M)$.
\item If $N$ is another finitely generated $\F G$-module, then $V_G(M \otimes_{\F} N) = V_G(M) \cap V_G(N)$, and $c_G(M \oplus N) = \max\{c_G(M), c_G(N)\}$.
\end{enumerate}
\end{thm}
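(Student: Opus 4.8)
The plan is to derive all four parts from the standard theory of support varieties, for which Chapter~5 of \cite{B} is a convenient reference; I would not reprove that theory from scratch but rather indicate how each clause follows. For (1), I would use that $\F G$ is self-injective, so that a finitely generated $\F G$-module has finite projective dimension precisely when it is projective: if $M$ is projective, its minimal projective resolution is concentrated in a single degree and the rate of growth of its terms is $0$, whereas conversely $c_G(M) = 0$ forces the minimal resolution to vanish in high degrees, hence $M$ has finite projective dimension and is therefore projective.

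For (2) I would appeal to the Alperin--Evens theorem, that $c_G(M) = \max_E c_E(M)$ as $E$ runs over all elementary Abelian $p$-subgroups of $G$ (which may in turn be read off from Quillen's stratification of $V_G(M)$); since restricting a projective $\F E'$-resolution to $\F E$, for $E \leq E'$, yields a projective $\F E$-resolution with the same term dimensions, one has $c_E(M) \leq c_{E'}(M)$ whenever $E \leq E'$, and as complexity is invariant under conjugation the maximum is already attained on a set of conjugacy-class representatives of the \emph{maximal} elementary Abelian $p$-subgroups. For (3), the Eckmann--Shapiro lemma supplies an isomorphism $\Ext^*_{\F G}(\Ind_H^G M, \Ind_H^G M) \cong \Ext^*_{\F H}(M, \Res_H \Ind_H^G M)$ compatible with the action of $H^\cdot(G,\F)$ through restriction, and Mackey's formula shows that $M$ is a direct summand of $\Res_H \Ind_H^G M$; together these give $c_H(M) \leq c_G(\Ind_H^G M)$. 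For the reverse inequality I would apply the exact functor $\Ind_H^G$ to a minimal projective $\F H$-resolution of $M$, obtaining a projective $\F G$-resolution of $\Ind_H^G M$ whose terms have dimensions multiplied by the fixed integer $[G:H]$, so the two growth rates coincide and $c_G(\Ind_H^G M) \leq c_H(M)$.

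Finally, for (4), the identity $V_G(M \oplus N) = V_G(M) \cup V_G(N)$ is immediate from the description of these varieties via annihilators in $H^\cdot(G,\F)$, and passing to Krull dimension gives $c_G(M \oplus N) = \max\{c_G(M), c_G(N)\}$; the equality $V_G(M \otimes_\F N) = V_G(M) \cap V_G(N)$ is the Avrunin--Scott tensor product theorem, whose inclusion ``$\subseteq$'' is a formal consequence of how $H^\cdot(G,\F)$ acts on $\Ext^*_{\F G}(M \otimes_\F N, M \otimes_\F N)$, while the reverse inclusion is the substantial direction, proved by reducing to elementary Abelian subgroups and performing an explicit rank-variety computation there. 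I expect this reverse inclusion --- together with the Quillen stratification behind (2) --- to be the only genuinely deep ingredient; in this paper both are used as black boxes.
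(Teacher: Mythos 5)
Your derivations are correct, but note that the paper offers no proof of this theorem at all: it is stated as a collation of standard facts, with the introduction pointing to Chapter~5 of \cite{B} for details, and the deep ingredients you identify (Quillen stratification/Alperin--Evens for (2), Avrunin--Scott for (4)) are exactly what is being taken as a black box there. Your filling-in of the routine deductions (self-injectivity of $\F G$ for (1), Eckmann--Shapiro plus Mackey and induction of resolutions for (3), monotonicity of complexity under restriction for (2)) is accurate and consistent with the standard treatment.
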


\subsection{Rank varieties of modules}

Let $E$ be an elementary Abelian $p$-group, that is $E$ is isomorphic to $(C_p)^k$, and assume $E$ has  generators $g_1,g_2, \dotsc, g_k$.  Let $M$ be a finitely generated $\F E$-module.  For each $\alpha = (\alpha_1, \alpha_2,\dotsc, \alpha_k) \in \F^{k}$ with $\alpha \ne 0$, let $u_{\alpha} = 1+\sum_{i=1}^k \alpha_i (g_i-1) \in \F E$.  Then $(u_{\alpha})^p = 1$.  Write $\left< u_{\alpha} \right>$ for the cyclic group of order $p$ generated by $u_{\alpha}$.  Then the group algebra $\F \left< u_{\alpha} \right>$ is subalgebra of $\F E$.

Let $M$ be a finitely generated $\F E$-module.  The rank variety $V^{\#}_E(M)$ of $M$ is defined as
$$
V^{\#}_E(M)=\{ \alpha \in \F^k \mid \alpha \ne 0,\  M \text{ is non-projective as an $\F\left< u_{\alpha} \right>$-module} \} \cup \{0\}.
$$
This is an affine subvariety of $\F^k$, and is independent of the choice and order of the generators (in the sense that two varieties obtained using different choices of generators are isomorphic).  More importantly, we have:

\begin{thm}
Let $E = (C_p)^k$, and let $M$ be a finitely generated $\F E$-module.  Then $V_E(M)$ and $V_E^{\#}(M)$ are isomorphic as affine varieties.  In particular, $c_E(M) \leq k$.
\end{thm}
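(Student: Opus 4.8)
I recognise this as the Avrunin--Scott theorem, building on Carlson's introduction of the rank variety, and would organise a proof in three stages. The bound $c_E(M)\le k$ is immediate: by K\"unneth $H^\cdot(E,\F)\cong H^\cdot(C_p,\F)^{\otimes k}$, which modulo its nilradical is a polynomial ring in $k$ indeterminates, so $V_E=\operatorname{MaxSpec}H^\cdot(E,\F)$ has dimension $k$ and $c_E(M)=\dim V_E(M)\le\dim V_E=k$. The substance is the isomorphism $V_E(M)\cong V_E^\#(M)$, and I would first fix an identification $V_E\cong\F^k$ compatible with the shifted subgroups: since $u_\alpha-1=\sum_i\alpha_i(g_i-1)$ can be completed to a system of algebra generators of $\F E\cong\F[t_1,\dots,t_k]/(t_i^p)$ — using $(\sum_i\alpha_i t_i)^p=\sum_i\alpha_i^p t_i^p=0$ in characteristic $p$ — the algebra $\F E$ is free as a module over the subalgebra $\F\langle u_\alpha\rangle$, the inclusion induces a morphism $\operatorname{res}^*\colon V_{\langle u_\alpha\rangle}\to V_E$, and under $V_{\langle u_\alpha\rangle}\cong\F$ its image is the line $\F\alpha$; so the point of $V_E$ ``at'' $\langle u_\alpha\rangle$ is $\alpha$. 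It then suffices to show $V_E(M)$ and $V_E^\#(M)$ are the same subset of $\F^k$; being closed, reduced and homogeneous, they will agree as affine varieties.

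To prove this set equality I would combine the formal properties shared by the two varieties with a single nontrivial computation. Both satisfy $V(\F)=\F^k$, $V(\Omega M)=V(M)$, $V(M\oplus N)=V(M)\cup V(N)$ and $V(M\otimes N)=V(M)\cap V(N)$ — for $V_E(-)$ the last being Theorem~\ref{T:cplx}(4), and for $V_E^\#(-)$ the Clebsch--Gordan rule for $\F[t]/(t^p)$-modules (a tensor product is free over $\langle u_\alpha\rangle$ iff one factor is) — and both satisfy $V(M)=\{0\}$ if and only if $M$ is projective (Theorem~\ref{T:cplx}(1) for $V_E$, Dade's lemma for $V_E^\#$). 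The computation is for Carlson's module $L_\zeta=\ker(\hat\zeta\colon\Omega^n\F\to\F)$ attached to a nonzero homogeneous $\zeta\in H^n(E,\F)$ (of even degree if $p$ is odd): one has $V_E(L_\zeta)=V_E(\langle\zeta\rangle)$, the hypersurface $\{\zeta=0\}$ in $\F^k$, while restricting $L_\zeta$ to $\langle u_\alpha\rangle$ — where $\Omega^n\F$ restricts to $\F$ up to a free summand, and $\hat\zeta$ is nonzero on the trivial summand exactly when $\operatorname{res}^*(\zeta)\ne0$, i.e.\ exactly when $\zeta(\alpha)\ne0$ — shows $V_E^\#(L_\zeta)=\{\zeta=0\}$ too. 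Writing $V_E(M)=\bigcap_iV_E(\langle\zeta_i\rangle)$ for homogeneous $\zeta_i\in H^\cdot(E,\F)$ and forming $N=L_{\zeta_1}\otimes\cdots\otimes L_{\zeta_s}$ then yields a module with $V_E(N)=V_E^\#(N)=V_E(M)$, the foothold for the general case.

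The final step — passing from the $L_\zeta$-modules to an arbitrary $M$, equivalently proving that $M{\downarrow}\langle u_\alpha\rangle$ is free if and only if $\alpha\notin V_E(M)$ — is the heart of the theorem and where I expect the genuine difficulty; it is Avrunin and Scott's ``Quillen stratification for modules''. The mechanism is to analyse $\Ext^\cdot_{\F E}(M,M)$, as a graded module over $H^\cdot(E,\F)$, after localisation at the maximal ideal $\mathfrak{m}_\alpha$, using the restriction and transfer maps between $\F E$ and the flat subalgebra $\F\langle u_\alpha\rangle$: when $M{\downarrow}\langle u_\alpha\rangle$ is free, $\Ext^\cdot_{\F\langle u_\alpha\rangle}(M{\downarrow}\langle u_\alpha\rangle,M{\downarrow}\langle u_\alpha\rangle)$ is concentrated in degree $0$ and one propagates this through restriction to conclude $\mathfrak{m}_\alpha\notin V_E(M)$; when it is non-free, the non-nilpotent action of the degree-$2$ generator of $H^\cdot(\langle u_\alpha\rangle,\F)$ is pushed back through transfer to force $\mathfrak{m}_\alpha\in V_E(M)$. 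Making these ``localise to a shifted subgroup'' arguments precise is the crux, and for a self-contained account I would follow Chapter~5 of \cite{B}. Once $V_E(M)=V_E^\#(M)$ is established, the asserted isomorphism of affine varieties follows (both sides being closed reduced subvarieties of $\F^k$), as does the stated bound $c_E(M)\le k$.
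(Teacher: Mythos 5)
The paper does not prove this theorem: it is recorded as a known background result (the Carlson/Avrunin--Scott theorem) with an implicit citation to Chapter~5 of \cite{B}, and nothing in the proof of the theorem is used elsewhere except the statement itself. Your outline correctly identifies the result as Avrunin--Scott, correctly isolates the easy part (the dimension bound via the K\"unneth computation of $H^\cdot(E,\F)$ modulo nilpotents), and correctly organises the hard part along the standard lines of Benson's account --- the compatible identification of $V_E$ with $\F^k$ via shifted subgroups, the shared formal properties of $V_E(-)$ and $V_E^\#(-)$, the Carlson modules $L_\zeta$, and the final localisation/transfer argument --- so it matches the route the paper implicitly points to. The one thing worth flagging is that your sketch of the last step is a pointer rather than a proof (you say so yourself); that is fine here, since the paper too simply defers to \cite{B} for it, but if a self-contained proof were required this is precisely where the real work would be.
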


\begin{lem} \label{L:}
Let $E = E_1 \times E_2$ be an elementary Abelian $p$-group.  Suppose that $M$ is a finitely generated $\F E$-module such that $M$ is projective as an $\F E_1$-module.  Then $c_{E_1 \times E_2}(M) \leq s$ where $E_2 \cong (C_p)^s$.
\end{lem}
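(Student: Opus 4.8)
The plan is to compute with rank varieties. Write $E_1 \cong (C_p)^r$ and $E_2 \cong (C_p)^s$, fix generators $g_1,\dots,g_r$ of $E_1$ and $h_1,\dots,h_s$ of $E_2$, and use the combined list $g_1,\dots,g_r,h_1,\dots,h_s$ as generators of $E = E_1\times E_2\cong(C_p)^{r+s}$. With respect to these generators $V_E^{\#}(M)$ is a closed conical subvariety of $\F^{r+s}=\F^r\oplus\F^s$, where the first summand records the coefficients of $g_1-1,\dots,g_r-1$ and the second those of $h_1-1,\dots,h_s-1$. By the theorem identifying $V_E(M)$ with $V_E^{\#}(M)$, it is enough to prove $\dim V_E^{\#}(M)\le s$.

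The first step identifies $V_E^{\#}(M)\cap(\F^r\oplus 0)$. For $\underline\alpha=(\alpha_1,\dots,\alpha_r)$ the element $u_{(\underline\alpha,0)}=1+\sum_{i=1}^r\alpha_i(g_i-1)$ of $\F E$ lies in the subalgebra $\F E_1$, and there it is precisely the element $u_{\underline\alpha}$ occurring in the definition of $V_{E_1}^{\#}(M\downarrow_{E_1})$; since projectivity over the cyclic algebra $\F\left<u_{(\underline\alpha,0)}\right>$ only depends on $M$ as a module over it, we obtain $V_E^{\#}(M)\cap(\F^r\oplus 0)=V_{E_1}^{\#}(M\downarrow_{E_1})$ under the identification $\F^r=\F^r\oplus 0$. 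Now $M$ is projective as an $\F E_1$-module, so $c_{E_1}(M)=0$ by Theorem~\ref{T:cplx}(1), and hence $V_{E_1}^{\#}(M\downarrow_{E_1})=\{0\}$ by the rank variety theorem applied to $E_1$. Thus $V_E^{\#}(M)\cap(\F^r\oplus 0)=\{0\}$.

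The second step is the dimension count. Let $\pi\colon\F^{r+s}\to\F^s$ be the projection along $\F^r\oplus 0$, and let $X$ be an irreducible component of $V_E^{\#}(M)$. Being a component of a cone, $X$ is itself a cone; in particular $0\in X$, so $\pi$ restricts to a dominant morphism $X\to\overline{\pi(X)}$ of irreducible varieties. By the theorem on fibre dimensions, every fibre of this morphism has dimension at least $\dim X-\dim\overline{\pi(X)}\ge\dim X-s$, while the fibre over $0$ is $X\cap(\F^r\oplus 0)\subseteq V_E^{\#}(M)\cap(\F^r\oplus 0)=\{0\}$, of dimension $0$. Hence $\dim X\le s$, and as this holds for every component, $\dim V_E^{\#}(M)\le s$. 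Therefore $c_{E_1\times E_2}(M)=c_E(M)=\dim V_E(M)=\dim V_E^{\#}(M)\le s$.

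I do not expect a serious obstacle; the argument is the assembly of standard facts. The one point to be careful about is the first step: one should resist asserting the stronger statement $V_E^{\#}(M)\subseteq 0\oplus\F^s$, which is false in general (the rank variety may sit ``diagonally'', meeting the coordinate subspace $\F^r\oplus 0$ only at the origin), so the conclusion must be extracted via the fibre-dimension estimate rather than a literal containment. It is also worth double-checking the two routine facts used without comment, namely that $V_E^{\#}(M)$ is conical and that it is compatible, as described, with restriction to $E_1$ for the chosen system of generators.
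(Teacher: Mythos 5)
Your proof is correct and follows essentially the same strategy as the paper's: both identify $V_E^\#(M) \cap (\F^r\oplus 0)$ with $V_{E_1}^\#(M)$, observe that projectivity over $\F E_1$ forces this intersection to be $\{0\}$, and conclude $\dim V_E^\#(M)\le s$ by a dimension count. The only difference is cosmetic: the paper cites the affine intersection-dimension inequality \cite[Chapter I, Proposition 7.1]{RH}, whereas you obtain the same bound via the fibre-dimension theorem for the projection to $\F^s$.
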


\begin{proof}
Let $E_1 \cong (C_p)^r$, and choose generators $g_1,\dotsc, g_{r+s}$ for $E$ such that $g_1,\dotsc, g_r \in E_1$ and $g_{r+1},\dotsc, g_{r+s} \in E_2$.  Embedding $\F^{r}$ into $\F^{r+s}$ in the obvious way, we have
$$
\F^{r} \cap V^{\#}_E(M) = V^{\#}_{E_1}(M).
$$
Thus,
$$
0 = c_{E_1}(M) = \dim(\F^{r} \cap V^{\#}_E(M)) \geq r + c_E(M) - (r+s)
$$
by \cite[Chapter I, Proposition 7.1]{RH}, so that $c_E(M) \leq s$.
\end{proof}

\subsection{Symmetric groups}

Let $n \in \mathbb{Z}^+$.  Denote by $\sym{n}$ the symmetric group on $n$ letters.  We identify $\sym{n}$ with the permutation group on $\{1,2,\dotsc, n\}$, and we compose the elements in $\sym{n}$ from right to left.  For $m \in \mathbb{Z}^+$ with $m \leq n$, we view $\sym{m}$ as the subgroup of $\sym{n}$ fixing $\{ m+1,m+2,\dotsc, n\}$ pointwise.

Let $r, s \in \mathbb{Z}^+$.  For $1 \leq i \leq s$ and $\sigma \in \sym{r}$, write $\sigma[i] \in \sym{rs}$ for the permutation sending $(i-1)r+j$ to $(i-1)r+\sigma(j)$ for each $1 \leq j \leq r$, and fixing everything else pointwise.  Also, let $\Delta_s\, \sigma = \prod_{i=1}^s \sigma[i]$.  If $H$ is a subgroup of $\sym{r}$, let $H[i] = \{ \sigma[i] \mid \sigma \in H \}$.  For $\tau \in \sym{s}$, write $\tau^{[r]} \in \sym{rs}$ for the permutation sending $(i-1)r + j$ to $(\tau(i)-1)r+j$ for each $1\leq i \leq s$ and $1 \leq j \leq r$.  If $K$ is a subgroup of $\sym{s}$, let $K^{[r]} = \{ \tau^{[r]} \mid \tau \in K \}$.

Let $a_p = (1,2,\dotsc, p) \in \sym{p}$.  For $r \in \mathbb{Z}^+$, let
$$
\E_r = \left< \Delta_{p^{r-1}}\,a_p, \Delta_{p^{r-2}}\, a_p^{[p]}, \dotsc, a_p^{[p^{r-1}]} \right> \subseteq \sym{p^r}.
$$
This is an elementary Abelian $p$-subgroup of $\sym{p^r}$ isomorphic to $(C_p)^r$.  These $\E_r$'s are the building blocks of distinguished representatives of the conjugacy classes of maximal elementary Abelian $p$-subgroups of $\sym{n}$:

\begin{thm}[{\cite[Chapter VI, Theorem 1.3]{AM}}] \label{T:eleab}
Let $n \in \mathbb{Z}^+$, and let $k = \lfloor n/p \rfloor$.  Every maximal elementary Abelian $p$-subgroup of $\sym{n}$ is conjugate to one of the following form:
$$
\prod_{j=1}^m \E_{r_j}[s_j/p^{r_j}]
$$
where $(r_1,r_2,\dotsc, r_m)$ is a decreasing sequence of positive integers such that $\sum_{i=1}^m p^{r_i} = pk$, and $s_j = \sum_{i=1}^j p^{r_i}$.
\end{thm}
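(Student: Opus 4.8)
The plan is to study an arbitrary elementary Abelian $p$-subgroup $E \leq \sym{n}$ through its permutation action on $\{1,2,\dotsc,n\}$, and to show that maximality forces $E$ to be a product of ``standard regular pieces'', one for each non-trivial orbit.

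First I would decompose $\{1,\dotsc,n\}$ into $E$-orbits. As $E$ is a $p$-group, every orbit has $p$-power size. On a single orbit $\cO$ the group $E$ acts transitively, and since $E$ is Abelian the point stabiliser $E_x$ is the same for every $x \in \cO$ (it is precisely the kernel of the action of $E$ on $\cO$); hence $E/E_x$ acts \emph{regularly} on $\cO$, and $|\cO| = |E/E_x|$ is a power of $p$. Write the non-trivial orbits as $\cO_1,\dotsc,\cO_m$ with $|\cO_j| = p^{r_j}$, numbered so that $r_1 \geq \dotsb \geq r_m \geq 1$, and let the remaining points be the fixed points of $E$. Because $E$ acts faithfully on $\{1,\dotsc,n\} = \bigl(\bigsqcup_{j} \cO_j\bigr) \sqcup \{\text{fixed points}\}$, the natural map $E \to \prod_j \operatorname{Sym}(\cO_j)$ is injective, with image inside $\prod_j (E/E_{x_j})$.

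Next I would normalise by a single conjugation in $\sym{n}$. A regular action of $(C_p)^r$ on a set of size $p^r$ is unique up to conjugacy in the corresponding symmetric group (it is just the left-translation action of the group on itself, so any two differ by a relabelling), and $\E_r$ provides one such model. Thus, after conjugating, we may assume that $\cO_j$ is the consecutive block $\{s_{j-1}+1,\dotsc,s_j\}$, where $s_j = \sum_{i \leq j} p^{r_i}$ (note that $p^{r_j} \mid s_j$ because the $r_i$ decrease, so the index $s_j/p^{r_j}$ makes sense), and that $E/E_{x_j}$ acts on $\cO_j$ exactly as $\E_{r_j}[s_j/p^{r_j}]$ does. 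Then $E \subseteq P := \prod_{j=1}^m \E_{r_j}[s_j/p^{r_j}]$. The factors of $P$ have pairwise disjoint supports, so they commute, and each is elementary Abelian; hence $P$ is itself an elementary Abelian $p$-subgroup of $\sym{n}$. If $E$ is maximal, it therefore equals $P$.

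It remains to identify $\sum_j p^{r_j}$ with $pk$. This sum is a multiple of $p$ not exceeding $n$, hence at most $pk$. If it were at most $pk - p$, then $E$ would fix at least $p$ points; a $p$-cycle $c$ on $p$ of them commutes with $E$ (disjoint support) and has order $p$, so $\langle E, c\rangle$ would be a strictly larger elementary Abelian $p$-subgroup, contradicting maximality. Hence $\sum_j p^{r_j} = pk$, and $E$ has the asserted form. The only point requiring care is the uniqueness of the regular action up to conjugacy, together with the bookkeeping needed to place the orbits in the canonical decreasing-block arrangement and to read off the indices $s_j/p^{r_j}$; the orbit analysis itself is routine. (The statement does not require it, but one can also check conversely that every group of the displayed form is in fact maximal.)
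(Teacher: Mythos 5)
Your argument is correct. Note that the paper itself offers no proof of this statement --- it is quoted verbatim from Adem--Milgram \cite[Chapter VI, Theorem 1.3]{AM} --- so there is nothing internal to compare against; your write-up is essentially the standard proof one finds for this classification. All the key steps are sound: for an Abelian group a transitive action has constant point stabiliser equal to the kernel, so the image of $E$ on each non-trivial orbit is a regular elementary Abelian group; the product of these images over the orbits is again elementary Abelian and contains $E$, so maximality forces equality; regular actions of a fixed abstract group are unique up to relabelling, which supplies the single conjugation bringing the orbits to consecutive blocks carrying the standard copies $\E_{r_j}[s_j/p^{r_j}]$; and the fixed-point count must be less than $p$ (else one adjoins a disjoint $p$-cycle), which pins down $\sum_j p^{r_j}=pk$. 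The one ingredient you implicitly use --- that $\E_r$ really is a regular elementary Abelian subgroup of $\sym{p^r}$ of order $p^r$ --- is asserted in the paper and is immediate from identifying $\{1,\dotsc,p^r\}$ with $(\mathbb{Z}/p)^r$ via base-$p$ digits, so nothing is missing.
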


\begin{note}
The support of each factor $\E_{r_j}[s_j/p^{r_j}]$ is $\{s_{j-1}+1, s_{j-1}+2,\dotsc, s_j\}$, so that these factors have disjoint support.
\end{note}






\subsection{Wreath products} \label{S:wreath}

Let $G$ be a finite group, and let $n \in \mathbb{Z}^+$.  The wreath product $G \wr \sym{n}$ has underlying set $\{ (g_1,\dotsc, g_n) \sigma \mid g_1,\dotsc, g_n \in G,\ \sigma \in \sym{n} \}$, and it  is the group with group composition defined by
$$
((g_1,\dotsc, g_n) \sigma) \cdot ((g'_1,\dotsc, g'_n) \tau) = (g_1g'_{\sigma^{-1}(1)}, \dotsc, g_ng'_{\sigma^{-1}(n)}) (\sigma \tau).
$$
We identify $\sym{n}$ with the subgroup $\{(1,\dotsc, 1) \sigma \mid \sigma \in \sym{n} \}$ of $G \wr \sym{n}$.

Let $M$ be a finitely generated (non-zero) left $\F G$-module.  Then $M^{\otimes n}$ admits a natural left $\F(G \wr \sym{n})$-action via
\begin{multline*}
((g_1,\dotsc, g_n) \sigma) \cdot (m_1\otimes \dotsb \otimes m_n) = (g_1m_{\sigma^{-1}(1)}) \otimes \dotsb \otimes (g_nm_{\sigma^{-1}(n)}) \\
\quad (g_1,\dotsc, g_n \in G,\ \sigma \in \sym{n},\ m_1,\dotsc, m_n \in M).
\end{multline*}
Suppose that
$$
M = \bigoplus_{i \in I} M(i)
$$
is a decomposition of $M$ as $\F G$-modules.  For each $\mathbf{i} = (i_1,\dotsc, i_n) \in I^n$, write $M(\mathbf{i})$ for the subset $M(i_1) \otimes_{\F} \dotsb \otimes_{\F} M(i_n)$ of $M^{\otimes n}$, so that
$$
M^{\otimes n} = \bigoplus_{\mathbf{i} \in I^n} M({\mathbf{i}}).$$

For each $A \subseteq I^n$, write $M(A)$ for $\bigoplus_{\mathbf{i} \in A} M(\mathbf{i})$ (thus, $M(I^n) = M^{\otimes n}$).  The set $I^n$ admits a natural left action of $\sym{n}$ via place permutation, i.e.\ $\sigma \cdot (i_1,\dotsc,i_n) = (i_{\sigma^{-1}(1)},\dotsc, i_{\sigma^{-1}(n)})$.  We note that the $\F(G \wr \sym{n})$-action on $M^{\otimes n}$ satisfies $\sigma (M(\mathbf{i})) = M(\sigma\cdot \mathbf{i})$ for all $\sigma \in \sym{n}$.  Thus, if $\mathcal{O}$ is a $\sym{n}$-orbit of $I^n$, then $M(\mathcal{O})$ is an $\F(G \wr \sym{n})$-submodule of $M^{\otimes n}$, and
$$M^{\otimes n} = \bigoplus_{\mathcal{O}} M(\mathcal{O})$$
where $\mathcal{O}$ runs over all $\sym{n}$-orbits of $I^n$.

Now suppose that $G$ is a subgroup of a finite group $K$.  Let $N = \Ind_G^K M$ and for each $i \in I$, let $N(i) = \Ind_G^K M(i)$.  Then $N = \bigoplus_{i\in I} N(i)$, and using analogous notations introduced above, we see that
$$N^{\otimes n} = \bigoplus_{\mathcal{O}} N(\mathcal{O}),$$
where $\mathcal{O}$ runs over all $\sym{n}$-orbits of $I^n$, is a decomposition of $N^{\otimes n}$ as $\F(K \wr \sym{n})$-modules.  In addition,
for each $\sym{n}$-orbit $\mathcal{O}$ of $I^n$, we have

\begin{lem} \label{L:wreathisom}
$$N(\mathcal{O}) \cong \Ind_{G \wr \sym{n}}^{K \wr \sym{n}} M(\mathcal{O}).$$
\end{lem}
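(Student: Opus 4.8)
The plan is to produce an explicit $\F(K \wr \sym{n})$-module isomorphism by comparing dimensions (equivalently, counting cosets) and exhibiting a natural map. First I would fix, for each $i \in I$, a set $T_i$ of left coset representatives of $G$ in $K$, so that $N(i) = \Ind_G^K M(i) = \bigoplus_{t \in T_i} t \otimes M(i)$ as $\F$-spaces. For a tuple $\mathbf{i} = (i_1,\dotsc,i_n) \in I^n$ this gives $N(\mathbf{i}) = N(i_1) \otimes_\F \dotsb \otimes_\F N(i_n)$ a basis indexed by $(t_1,\dotsc,t_n) \in T_{i_1} \times \dotsb \times T_{i_n}$ together with the tensor factors from the $M(i_j)$. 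Summing over $\mathbf{i}$ in a fixed orbit $\mathcal{O}$ then identifies $N(\mathcal{O})$, as an $\F$-space, with $\F[(K/G)^n \cap (\text{preimage of }\mathcal{O})] \otimes_{\text{something}} M(\mathcal{O})$; the point is that the natural set on which $K \wr \sym{n}$ acts, namely the $(K\wr\sym{n})$-orbit of the "identity coset" inside $(K/G)^n$, has stabiliser exactly $G \wr \sym{n}$ when one restricts attention to a single $\sym{n}$-orbit $\mathcal{O}$ of $I^n$.

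Concretely, the second step is to define the candidate isomorphism $\Phi \colon \Ind_{G \wr \sym{n}}^{K \wr \sym{n}} M(\mathcal{O}) \to N(\mathcal{O})$ on the generating space by $\Phi(1 \otimes (m_1 \otimes \dotsb \otimes m_n)) = (1\otimes m_1) \otimes \dotsb \otimes (1 \otimes m_n)$, using that $M(\mathcal{O}) \subseteq M^{\otimes n}$ sits inside $N^{\otimes n}$ via the unit maps $M(i_j) \to N(i_j)$, $m \mapsto 1\otimes m$, and then extend $\F(K\wr\sym{n})$-linearly. One must check this is well-defined, i.e.\ that elements of $G \wr \sym{n}$ act compatibly on the two sides; this is immediate from the definitions of the wreath-product action on a tensor power and of the induced module, since a wreath element $(g_1,\dotsc,g_n)\tau$ with all $g_j \in G$ sends $(1\otimes m_1)\otimes\dotsb\otimes(1\otimes m_n)$ to $(g_1 \otimes m_{\tau^{-1}(1)}) \otimes \dotsb = (1 \otimes g_1 m_{\tau^{-1}(1)}) \otimes \dotsb$, matching the action on $1 \otimes (m_1 \otimes \dotsb \otimes m_n)$ inside the induced module.

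The third step is to verify $\Phi$ is bijective. Injectivity and surjectivity both follow by a coset-counting argument: a set of left coset representatives for $G \wr \sym{n}$ in $K \wr \sym{n}$ is given by the elements $(t_1,\dotsc,t_n)$ with $t_j \in T$ (a fixed transversal of $G$ in $K$), and applying these to the generating space $M(\mathcal{O})$ produces exactly the decomposition of $N(\mathcal{O})$ into the pieces $t_1 M(i_1') \otimes \dotsb \otimes t_n M(i_n')$ as $(t_1,\dotsc,t_n)$ ranges over $T^n$ and $(i_1',\dotsc,i_n')$ over $\mathcal{O}$ — this is precisely the $\F$-basis description of $N(\mathcal{O})$ obtained in the first step. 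The only subtlety, and the step I expect to require the most care, is keeping the bookkeeping straight: one must check that the $\sym{n}$-part of a coset representative permutes the orbit $\mathcal{O}$ correctly and that no two distinct pairs $((t_1,\dotsc,t_n), \mathbf{i})$ land in the same summand, so that the dimension count is exact rather than merely an inequality. Once the transversal of $G\wr\sym{n}$ in $K\wr\sym{n}$ is matched up termwise with the transversals $T^n$ used to build $N(\mathcal{O})$ from $M(\mathcal{O})$, the bijectivity of $\Phi$ — and hence the lemma — follows.
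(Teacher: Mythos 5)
Your proof is correct and takes essentially the same route as the paper: both hinge on the observation that $T^n$ is a set of left coset representatives for $G\wr\sym{n}$ in $K\wr\sym{n}$ and then write down the explicit isomorphism between $N(\mathcal{O})$ and $\Ind_{G\wr\sym{n}}^{K\wr\sym{n}} M(\mathcal{O})$. The paper gives the map from $N(\mathcal{O})$ to the induced module while you construct its inverse (and spell out the well-definedness and bijectivity checks that the paper leaves to the reader), but this is a presentational difference, not a different argument.
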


\begin{proof}
Let $T$ be a set of left coset representatives of $G$ in $K$.  Then $T^n$ is a set of left coset representatives of $G \wr \sym{n}$ in $K \wr \sym{n}$.  The reader may check that the map $(t_1 \otimes v_1) \otimes \dotsb \otimes (t_n \otimes v_n) \mapsto (t_1,\dotsc,t_n) \otimes (v_1 \otimes \dotsb \otimes v_n)$ for $t_1, \dotsc, t_n \in T$, $v_1 \in M(i_1), v_2 \in M(i_2), \dotsc, v_n \in M(i_n)$ with $(i_1,\dotsc, i_n) \in \mathcal{O}$ gives the required isomorphism.
\end{proof}

\begin{prop} \label{P:}
Let $G$ be an Abelian $p'$-subgroup of a finite group $K$.  Let $M$ be a non-zero $\F G$-module, and let $N = \Ind_G^K M$.  Let $n \in \mathbb{Z}^+$, and let $S$ be an $\F\sym{n}$-module, so that $S$ becomes an $\F(K \wr \sym{n})$-module via inflation.  Then
$$c_{K \wr \sym{n}} (N^{\otimes n} \otimes_{\F} S) = c_{\sym{n}} (S).$$
\end{prop}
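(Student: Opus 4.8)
The plan is to reduce the computation to an induced module and then apply parts (3) and (1) of Theorem \ref{T:cplx}. Since $G$ is an Abelian $p'$-group, $M$ is a projective $\F G$-module (the group algebra $\F G$ is semisimple), hence $N = \Ind_G^K M$ is a projective $\F K$-module, and therefore $N^{\otimes n}$ is a projective $\F(K \wr \sym{n})$-module when $\sym{n}$ acts by place permutation — this last point needs a small argument, but it follows because permutation modules over the base group of a wreath product, induced from a module that is already free over the base, remain free. More precisely, write $M = \bigoplus_{i \in I} M(i)$ as a sum of one-dimensional $\F G$-modules (possible since $G$ is Abelian and $\F$ is algebraically closed of characteristic coprime to $|G|$), and correspondingly $N = \bigoplus_{i\in I} N(i)$ with each $N(i) = \Ind_G^K M(i)$. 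By the discussion preceding Lemma \ref{L:wreathisom}, we get $N^{\otimes n} = \bigoplus_{\mathcal O} N(\mathcal O)$ over $\sym{n}$-orbits $\mathcal O$ of $I^n$, and by Lemma \ref{L:wreathisom}, $N(\mathcal O) \cong \Ind_{G \wr \sym{n}}^{K \wr \sym{n}} M(\mathcal O)$.

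Next I would analyse the stabiliser structure. Fix an orbit $\mathcal O$ and a representative $\mathbf i = (i_1,\dots,i_n) \in \mathcal O$; let $\sym{n}_{\mathbf i}$ be its place-permutation stabiliser, a Young subgroup $\sym{n_1} \times \dots \times \sym{n_t}$ where the $n_j$ record the multiplicities of the distinct values appearing in $\mathbf i$. Then $M(\mathcal O) \cong \Ind_{G\wr \sym{n}_{\mathbf i}}^{G \wr \sym{n}} M(\mathbf i)$ as $\F(G\wr\sym{n})$-modules (the orbit–stabiliser decomposition transported through the tensor-factor bookkeeping). Tensoring with the inflated module $S$ and using the push-pull (projection) formula $\Ind_H^G(X) \otimes Y \cong \Ind_H^G(X \otimes \Res_H^G Y)$, together with $c_{K\wr\sym{n}}(\Ind_{G\wr\sym{n}}^{K\wr\sym{n}} Z) = c_{G\wr\sym{n}}(Z)$ and $c_{G\wr\sym{n}}(\Ind_{G\wr\sym{n}_{\mathbf i}}^{G\wr\sym{n}} W) = c_{G\wr\sym{n}_{\mathbf i}}(W)$ from Theorem \ref{T:cplx}(3), reduces the whole problem — via Theorem \ref{T:cplx}(4) on direct sums — to computing $c_{G \wr \sym{n}_{\mathbf i}}(M(\mathbf i) \otimes_\F \Res S)$ for the single "diagonal'' orbit (whose stabiliser is all of $\sym{n}$, giving the full $S$) and checking all other orbits contribute no more.

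For the diagonal orbit, $M(\mathbf i) = M(i)^{\otimes n}$ for a single $i$, which is a one-dimensional, hence invertible, $\F(G\wr\sym{n})$-module on which $G^n$ acts via a character and $\sym{n}$ permutes trivially; tensoring by an invertible module does not change the cohomological variety, so $c_{G\wr\sym{n}}(M(i)^{\otimes n} \otimes S) = c_{G\wr\sym{n}}(S)$. Since $G$ is a $p'$-group, the maximal elementary Abelian $p$-subgroups of $G \wr \sym{n}$ lie inside the copy of $\sym{n}$, and $S$ is inflated from $\sym{n}$, so $c_{G\wr\sym{n}}(S) = c_{\sym{n}}(S)$ by Theorem \ref{T:cplx}(2). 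This gives the lower bound $c_{K\wr\sym{n}}(N^{\otimes n}\otimes S) \ge c_{\sym{n}}(S)$. For the upper bound, each non-diagonal orbit has proper Young stabiliser $\sym{n_1}\times\dots\times\sym{n_t}$ with $t \ge 2$, and a similar character-twist argument identifies the corresponding summand's complexity with $c_{G\wr(\sym{n_1}\times\dots\times\sym{n_t})}$ of an inflation from $\sym{n_1}\times\dots\times\sym{n_t}$, namely $c_{\sym{n_1}\times\dots\times\sym{n_t}}(\Res^{\sym{n}}_{\sym{n_1}\times\dots\times\sym{n_t}} S)$, which is at most $c_{\sym{n}}(S)$ since restriction does not increase complexity. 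Taking the maximum over all orbits via Theorem \ref{T:cplx}(4) yields equality.

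The main obstacle I expect is the careful bookkeeping in the stabiliser step: tracking how the $\sym{n}$-orbit decomposition of $M^{\otimes n}$ interacts with the wreath-product structure, and verifying cleanly that $M(\mathcal O) \cong \Ind_{G \wr \sym{n}_{\mathbf i}}^{G\wr\sym{n}}M(\mathbf i)$ and that the tensor-with-$S$ and induction operations commute in the required way — essentially a repeated application of the projection formula in the wreath-product setting. The complexity-theoretic inputs (Theorem \ref{T:cplx}(1)--(4)) are then applied mechanically; the genuine content is organising the combinatorics of orbits and making the "invertible twist does not change variety'' observations precise.
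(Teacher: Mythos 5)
Your proposal is correct, but it takes a longer route than the paper for the upper bound. Both proofs share the same skeleton: decompose $M$ into one-dimensional summands, pass to the orbit decomposition $N^{\otimes n} = \bigoplus_{\mathcal O} N(\mathcal O)$, invoke Lemma \ref{L:wreathisom} to rewrite each $N(\mathcal O)$ as an induced module, apply the projection formula, and handle the lower bound via the diagonal orbit $\mathcal O_i$. The divergence is in how you bound the contribution of an arbitrary orbit. You further decompose $M(\mathcal O) \cong \Ind_{G\wr\sym{n}_{\mathbf i}}^{G\wr\sym{n}} M(\mathbf i)$ through the Young subgroup stabiliser, apply the projection formula and Theorem \ref{T:cplx}(3) a second time, and then appeal to the (standard but unstated-in-the-paper) fact that restriction does not increase complexity. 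The paper avoids this stabiliser analysis entirely: it simply notes that since $p\nmid|G|$, representatives of maximal elementary Abelian $p$-subgroups $E$ of $G\wr\sym{n}$ may be taken inside $\sym{n}$, and then applies Theorem \ref{T:cplx}(4) directly to get $V_E(M(\mathcal O)\otimes S) = V_E(M(\mathcal O))\cap V_E(S) \subseteq V_E(S)$, hence $c_E(M(\mathcal O)\otimes S) \leq c_E(S) \leq c_{\sym{n}}(S)$ uniformly over all orbits. This is the same underlying mechanism you use for the diagonal orbit (where the intersection is equality because $M(\mathcal O_i)$ is trivial on $E$), applied in the ``$\subseteq$'' direction for the other orbits — so the paper's argument is both shorter and more self-contained, needing only the ingredients already listed in Theorem \ref{T:cplx}. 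Your approach works and is a reasonable way to organise the bookkeeping, but if you follow it you should either cite the restriction inequality $c_H(M)\le c_G(M)$ or observe that it, too, follows from the variety-intersection reasoning the paper uses directly.
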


\begin{proof}
Since $\operatorname{char}(\F) = p \nmid |G|$, we see that $M$ is completely reducible, so that
$$M = \bigoplus_{i \in I} M(i)$$ where, since $G$ is Abelian, each $M(i)$ is one-dimensional and $I\neq \varnothing$.  Let $N(i) = \Ind_G^K M(i)$ for each $i \in I$, so that $N = \bigoplus_{i \in I} N(i)$.  We have, by Lemma \ref{L:wreathisom},
$$
N^{\otimes n} = \bigoplus_{\mathcal{O}} N(\mathcal{O}) \cong \bigoplus_{\mathcal{O}} \Ind_{G \wr \sym{n}}^{K \wr \sym{n}} M(\mathcal{O}),$$
where the sum runs over all $\sym{n}$-orbits $\mathcal{O}$ of $I^n$,
so that
$$
N^{\otimes n} \otimes_{\F} S \cong \left(\bigoplus_{\mathcal{O}} \Ind_{G \wr \sym{n}}^{K \wr \sym{n}} M(\mathcal{O})\right) \otimes_{\F} S
\cong \bigoplus_{\mathcal{O}} \left(\Ind_{G \wr \sym{n}}^{K \wr \sym{n}} (M(\mathcal{O}) \otimes_{\F} S)\right).
$$
Thus $c_{K \wr \sym{n}} (N^{\otimes n} \otimes_{\F} S) = \max_{\mathcal{O}} \{c_{G\wr\sym{n}}(M(\mathcal{O}) \otimes_{\F} S)\}$ by Theorem \ref{T:cplx}(3,4). Since $p \nmid |G|$, we may pick representatives of the conjugacy classes of maximal elementary Abelian $p$-subgroups of $G \wr \sym{n}$ to be subgroups of $\sym{n}$.  For each such representative $E$,
$$
V_E(M(\mathcal{O}) \otimes_{\F} S) = V_E(M(\mathcal{O})) \cap V_E(S) \subseteq V_E(S)
$$
by Theorem \ref{T:cplx}(4), so that $c_E(M(\mathcal{O}) \otimes_{\F} S) \leq c_E(S) \leq c_{\sym{n}}(S)$.  Thus
$$
c_{G \wr \sym{n}} (M(\mathcal{O}) \otimes_{\F} S) = \max_E\{c_E(M(\mathcal{O}) \otimes_{\F} S)\} \leq c_{\sym{n}}(S)
$$
for all $\sym{n}$-orbits $\mathcal{O}$ of $I^n$.  On the other hand, if $i \in I$, then $\mathcal{O}_i = \{ (i,\dotsc, ,i) \}$ is a singleton $\sym{n}$-orbit of $I^n$, and $M(\mathcal{O}_i) = (M(i))^{\otimes n}$ is one-dimensional, on which $\sym{n}$ acts trivially.  Thus,
$$
V_E(M(\mathcal{O}_i) \otimes_{\F} S) = V_E(M(\mathcal{O}_i)) \cap V_E(S) = V_E \cap V_E(S) = V_E(S).
$$
This implies that $c_{G \wr \sym{n}}(M(\mathcal{O}_i) \otimes_{\F} S) = c_{\sym{n}}(S)$, and hence
$$
c_{K \wr \sym{n}}(N^{\otimes n} \otimes_{\F} S) = \max_{\mathcal{O}} \{c_{G\wr\sym{n}}(M(\mathcal{O}) \otimes_{\F} S)\} = c_{\sym{n}}(S).
$$
\end{proof}

\subsection{Lie module}

Denote by $\Lie(n)$ the Lie module for the symmetric group $\sym{n}$.  This is the left ideal of $\F\sym{n}$ generated by the `Dynkin-Specht-Wever' element
$$
\omega_n = (1-d_2) (1-d_3) \dotsm (1-d_n)
$$
where $d_i$ is the descending $i$-cycle $(i,i-1,\dotsc, 1)$ of $\sym{n}$.  (Recall that we compose the elements of $\sym{n}$ from right to left.)

The following lemma about $\Lie(n)$ is well-known, but we are unable to find an appropriate reference in the existing literature.

\begin{lem} \label{L:free}
As an $\F\sym{n-1}$-module, $\Lie(n)$ is free of rank $1$.
\end{lem}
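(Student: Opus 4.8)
The plan is to exhibit an explicit generator of $\Lie(n)$ as an $\F\sym{n-1}$-module and show that the resulting surjection from the free module $\F\sym{n-1}$ is an isomorphism by a dimension count. Recall that $\Lie(n) = \F\sym{n}\,\omega_n$ where $\omega_n = (1-d_2)(1-d_3)\dotsm(1-d_n)$, and observe that $\omega_n = \omega_{n-1}(1-d_n)$, with $\omega_{n-1} \in \F\sym{n-1}$. Since $\omega_{n-1}$ is a unit in $\F\sym{n-1}$ up to a scalar — more precisely, the Dynkin--Specht--Wever operator satisfies $\omega_{n-1}^2 = n'\,\omega_{n-1}$ on multilinear elements with a suitable normalization, but this is characteristic-dependent — I should instead argue structurally. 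The cleaner route: it is classical (Klyachko, or the Poincaré--Birkhoff--Witt theorem applied to the free Lie algebra) that over $\mathbb{Z}$, and hence over any field, the multilinear component $L^n$ of the free Lie algebra on $n$ letters is free of rank $(n-1)!$ as a module for $\sym{n-1}$, where $\sym{n-1}$ permutes the letters $\{2,\dots,n\}$ and letter $1$ is fixed; and $\Lie(n)$ is naturally isomorphic to this multilinear component as an $\F\sym{n-1}$-module.

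First I would set up the identification: the map $\F\sym{n} \to L^n$, $\sigma \mapsto [\cdots[[x_{\sigma(1)}, x_{\sigma(2)}], x_{\sigma(3)}],\dots, x_{\sigma(n)}]$ (the left-normed bracket) has image all of $L^n$ and kernel exactly the left annihilator of $\omega_n$, by the Dynkin--Specht--Wever theorem; composing with right multiplication by $\omega_n$ identifies $\Lie(n) = \F\sym{n}\omega_n$ with $L^n$ as left $\F\sym{n}$-modules, hence as $\F\sym{n-1}$-modules. Next I would invoke the basis: by PBW applied to the lower central series, or directly via the theory of Lyndon words / the Lazard elimination, a basis of $L^n$ as a vector space is given by the left-normed brackets $[x_1, x_{w_1}, x_{w_2}, \dots, x_{w_{n-1}}]$ where $(w_1, \dots, w_{n-1})$ ranges over all orderings of $\{2, 3, \dots, n\}$ — equivalently, fixing the innermost letter to be $x_1$ and permuting the rest freely gives a basis. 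This is exactly the statement that $L^n$ is $\F\sym{n-1}$-free of rank one on the generator $b = [x_1, x_2, x_3, \dots, x_n]$ (left-normed), since $\sym{n-1}$ acting on $\{2,\dots,n\}$ sends $b$ to the $(n-1)!$ basis elements and these are linearly independent and span.

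Then the argument concludes: $\Lie(n) \cong L^n$ is generated as an $\F\sym{n-1}$-module by the image of $b$, so there is a surjection $\F\sym{n-1} \twoheadrightarrow \Lie(n)$; but $\dim_\F \Lie(n) = \dim_\F L^n = (n-1)! = \dim_\F \F\sym{n-1}$, so this surjection is an isomorphism and $\Lie(n)$ is $\F\sym{n-1}$-free of rank $1$. The main obstacle is pinning down the "free of rank one" statement for $L^n$ over an arbitrary field without circularity: the cleanest self-contained justification is that the left-normed brackets with fixed innermost letter form a $\mathbb{Z}$-basis of the multilinear part of the free Lie ring (a standard consequence of the Lazard elimination theorem or of Reutenauer's treatment in \emph{Free Lie Algebras}), and $\mathbb{Z}$-freeness is preserved under $-\otimes_{\mathbb{Z}}\F$; alternatively one checks directly that $\omega_n$ has the property that $\{\sigma\omega_n : \sigma \in \sym{n-1}\}$ is linearly independent, which again follows from the Dynkin idempotent acting as a nonzero scalar multiple of a projection in the relevant component over $\mathbb{Q}$ together with a reduction-mod-$p$ rank argument. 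Either way the content is the classical rank-$(n-1)!$ freeness; the write-up will just need to cite it carefully since, as the authors note, an explicit reference is awkward to locate.
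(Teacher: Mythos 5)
Your proposal is correct but takes a genuinely different route from the paper. The paper's proof is self-contained and works entirely inside $\F\sym{n}$: it uses the identities $\omega_s\omega_n = s\,\omega_n$ and $\omega_r = \omega_{r-1}(1-d_r)$ to deduce $\omega_n = -\omega_{r-1}d_r\,\omega_n$ for $2 \leq r \leq n$; from this it shows that for any $\rho\in\sym{n}$ with $\rho(r)=1$, $\rho\omega_n = (-\rho\omega_{r-1}d_r)\omega_n$ lies in $\F\sym{n,1}\,\omega_n$ (where $\sym{n,1}$ is the stabiliser of $1$), so the map $x \mapsto x\omega_n$ from $\F\sym{n,1}$ to $\Lie(n)$ is surjective, hence bijective by the dimension count $(n-1)!$; finally, right-multiplication by $(1,n)$ transports this to the paper's $\sym{n-1}$ (which, per their convention, fixes $n$ rather than $1$). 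Your approach instead identifies $\Lie(n)$ with the multilinear component $L^n$ of the free Lie algebra and invokes the classical fact that the left-normed brackets $[x_1,x_{w_1},\dotsc,x_{w_{n-1}}]$, as $(w_1,\dotsc,w_{n-1})$ runs over orderings of $\{2,\dotsc,n\}$, form a basis; this is valid over any commutative ring (each such bracket has a unique tensor monomial beginning with $x_1$, with coefficient $1$, so linear independence is immediate), and is exactly the kind of result the authors chose to re-derive from scratch because clean references are elusive. Two things to tighten in your write-up: first, state explicitly which copy of $\sym{n-1}$ you mean — your argument gives freeness over the stabiliser of the letter $1$, while the paper's $\sym{n-1}$ is the stabiliser of $n$, and one should remark that conjugating by $(1,n)$ carries one to the other; second, rather than hedging about references or a ``reduction-mod-$p$ rank argument,'' simply give the one-line leading-monomial argument for linear independence of the left-normed brackets, which is elementary and characteristic-free. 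With those edits your proof is a legitimate alternative to the paper's, trading the paper's self-contained $\omega_n$-calculus for a (standard but external) structural fact about free Lie algebras.
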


\begin{proof}
It is well-known that $(\omega_n)^2 = n\omega_n$, and $\dim_{\F} (\Lie(n)) = (n-1)!$ (see, for example, \cite[Theorem 8.16]{R}, and \cite[Theorem 5.11]{MKS} with $n_1 = n_2 = \dotsb = 1$).  We claim first that $\omega_n = -\omega_{r-1}d_r \omega_n$ whenever $2 \leq r \leq n$ (note that $\omega_1 = 1$ by definition).  To prove this, we have
\begin{gather*}
\omega_r = \omega_{r-1}(1-d_r) = \omega_{r-1} - \omega_{r-1}d_r, \\
\omega_s \omega_n = \omega_s \omega_s (1- d_{s+1}) \dotsm (1-d_n) = s \omega_s (1- d_{s+1}) \dotsm (1-d_n) = s\omega_n
\end{gather*}
for all $1 \leq s \leq n$.  Thus,
$$
(1+\omega_{r-1}d_r) \omega_n = (1+\omega_{r-1}-\omega_r)\omega_n = \omega_n + (r-1)\omega_n -r \omega_n = 0,
$$
proving the claim.

Now, if $\rho \in \sym{n}$ such that $\rho(1) \ne 1$, say $\rho(r) = 1$, then $\rho \omega_n = -\rho \omega_{r-1}d_r \omega_n$, and $-\rho \omega_{r-1}d_r \in \F \sym{n,1}$, where $\sym{n,1} = \{ \sigma \in \sym{n} \mid \sigma(1) = 1\}$, so that $\rho \omega_n \in \F\sym{n,1} \omega_n$ for all $\rho \in \sym{n}$.
Thus, the (obviously linear) map $\psi : \F \sym{n,1} \to \Lie(n)$ defined by $x \mapsto x\omega_n$ is surjective, and hence bijective by dimension count.  Define $\phi : \F\sym{n-1} \to \Lie(n)$ by $y \to y(1,n) \omega_n$.  Then $\phi$ is clearly an $\F\sym{n-1}$-module homomorphism.  In addition, it is injective since $\psi$ is, and is therefore bijective by dimension count.
\end{proof}

\subsection{Tensor powers and Lie powers}

Let $n,r,s \in \mathbb{Z}^+$.  Let $V$ be a finite-dimensional vector space over $\F$.  If $V$ is a left module for the Schur algebra $S(n,r)$, then the tensor power $V^{\otimes s}$ is naturally a left $S(n,rs)$-module.  In addition, $V^{\otimes s}$ admits a commuting right action of $\sym{s}$ by place permutation.  The Lie power $L^s(V)$ of $V$ may be defined as $(V^{\otimes s})\omega_s$ where $\omega_s$ is the Dynkin-Specht-Wever element mentioned in the last subsection; this is a left $S(n,rs)$-submodule of $V^{\otimes s}$.

If $\dim(V) = n$, then $V$ is naturally a left $S(n,1)$-module.  Thus $V^{\otimes s}$ is a $(S(n,s),\F\sym{s})$-bimodule, while $L^s(V)$ is a left $S(n,s)$-submodule of $V^{\otimes s}$.  When $n \geq s$, the Schur functor $f_s$ sends $V^{\otimes s}$ to the $(\F\sym{s},\F\sym{s})$-bimodule $\F\sym{s}$, and it sends $L^s(V)$ to the left $\F\sym{s}$-module $\Lie(s)$.
The effect of the Schur functor $f_{rs}$ on $V^{\otimes s}$ and $L^s(V)$, when $V$ is a left $S(n,r)$-module and $n \geq rs$, is described in detail in \cite{LT}.  In this paper, we need the latter result:

\begin{thm}[{\cite[Corollary 3]{LT}}] \label{T:Schur}
Let $n, r, s \in \mathbb{Z}^+$ with $n \geq rs$.  Let $V$ be an $S(n,r)$-module.  Then
$$
f_{rs} L^s(V) \cong \Ind_{\sym{r} \wr \sym{s}}^{\sym{rs}} ((f_r(V))^{\otimes s} \otimes_{\F} \Lie(s))
$$
where $\sym{r} \wr \sym{s}$ is identified with the subgroup $(\prod_{i=1}^s \sym{r}[i]) \sym{s}^{[r]}$ of $\sym{rs}$, and it acts on $(f_r(V))^{\otimes s}$ and $\Lie(s)$ via
\begin{align*}
((\sigma_1,\dotsc, \sigma_s)\tau) \cdot (x_1 \otimes \dotsb \otimes x_s) & = (\sigma_1 x_{\tau^{-1}(1)}) \otimes \dotsb \otimes (\sigma_s x_{\tau^{-1}(s)}), \\
((\sigma_1,\dotsc, \sigma_s)\tau) \cdot y &= \tau y,
\end{align*}
for all $\sigma_1,\dotsc, \sigma_s \in \sym{r}$, $\tau \in \sym{s}$, $x_1,\dotsc, x_s \in f_r(V)$ and $y \in \Lie(s)$.
\end{thm}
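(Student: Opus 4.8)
The plan is to reduce the statement to the corresponding description of the ordinary tensor power $V^{\otimes s}$, to prove that by a direct weight-space computation, and then to cut down by the Dynkin--Specht--Wever element $\omega_s$.

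First I would set up the Schur functor $f_{rs}$ (available since $n\geq rs$) in its weight-space incarnation: it is exact, $f_{rs}(M)=M_{(1^{rs},0^{n-rs})}$, with $\F\sym{rs}$-action coming from the permutation matrices in $\sym{rs}\leq\sym{n}\leq\GL_n$. As $V$ is an $S(n,r)$-module, $V^{\otimes s}$ is an $S(n,rs)$-module carrying a commuting right place-permutation action of $\sym{s}$ by $S(n,rs)$-module automorphisms, so $f_{rs}(V^{\otimes s})$ is naturally an $(\F\sym{rs},\F\sym{s})$-bimodule. Since $L^s(V)=(V^{\otimes s})\omega_s$ is the image of the $S(n,rs)$-endomorphism ``right multiplication by $\omega_s$'' of $V^{\otimes s}$, exactness of $f_{rs}$ gives $f_{rs}(L^s(V))=\bigl(f_{rs}(V^{\otimes s})\bigr)\omega_s$; thus it is enough to identify the bimodule $f_{rs}(V^{\otimes s})$ and then right-multiply by $\omega_s$.

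For the bimodule identification, writing $V=\bigoplus_\mu V_\mu$ for the weight-space decomposition I would compute
\[
f_{rs}(V^{\otimes s})=(V^{\otimes s})_{(1^{rs},0^{n-rs})}=\bigoplus V_{e_{A_1}}\otimes_{\F}\dotsb\otimes_{\F}V_{e_{A_s}},
\]
the sum over all ordered set partitions $(A_1,\dotsc,A_s)$ of $\{1,\dotsc,rs\}$ into $r$-element blocks, where $e_A$ is the indicator composition of $A$. For each such $A$ the permutation matrix carrying $\{1,\dotsc,r\}$ monotonically onto $A$ identifies $V_{e_A}$ with $f_r(V)$. Under these identifications the left $\sym{rs}$-action permutes the summands via its natural action on ordered partitions (the standard partition $A_i=\{(i-1)r+1,\dotsc,ir\}$ having stabiliser $\prod_{i=1}^s\sym{r}[i]$), and the right $\sym{s}$-action permutes them by permuting the blocks. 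Transitivity of the left action then gives $f_{rs}(V^{\otimes s})\cong\Ind_{\prod_i\sym{r}[i]}^{\sym{rs}}\bigl(f_r(V)^{\boxtimes s}\bigr)$ as $\F\sym{rs}$-modules; factoring this induction through $\sym{r}\wr\sym{s}=(\prod_i\sym{r}[i])\sym{s}^{[r]}$ and invoking the standard isomorphism $\Ind_{\prod_i\sym{r}[i]}^{\sym{r}\wr\sym{s}}\bigl(f_r(V)^{\boxtimes s}\bigr)\cong f_r(V)^{\otimes s}\otimes_{\F}\F\sym{s}$ (base group on the $f_r(V)$-factors, $\sym{s}$ permuting those factors and left-multiplying $\F\sym{s}$), I would verify that the right $\sym{s}$-action corresponds precisely to right multiplication on the $\F\sym{s}$-factor. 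This yields $f_{rs}(V^{\otimes s})\cong\Ind_{\sym{r}\wr\sym{s}}^{\sym{rs}}\bigl(f_r(V)^{\otimes s}\otimes_{\F}\F\sym{s}\bigr)$ as bimodules.

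To finish, right multiplication by $\omega_s$ replaces the factor $\F\sym{s}$ by $\F\sym{s}\,\omega_s=\Lie(s)$ (a left ideal, hence a left $\F\sym{s}$-module); as induction is additive and commutes with this operation, $f_{rs}(L^s(V))\cong\Ind_{\sym{r}\wr\sym{s}}^{\sym{rs}}\bigl(f_r(V)^{\otimes s}\otimes_{\F}\Lie(s)\bigr)$ with the base group now acting trivially on $\Lie(s)$ and $\tau\in\sym{s}$ by left multiplication --- exactly the actions in the statement. The hard part, I expect, will be the bookkeeping in the preceding paragraph: matching, uniformly over the ordered partitions and compatibly with the identifications $V_{e_A}\cong f_r(V)$, the left permutation-matrix action of $\sym{rs}$ with the wreath-product action, and the right place-permutation action of $\sym{s}$ with the right-regular action on the $\F\sym{s}$-factor. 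Everything else is formal, resting on exactness of $f_{rs}$ and of induction together with $(\omega_s)^2=s\,\omega_s$ and $\F\sym{s}\,\omega_s=\Lie(s)$.
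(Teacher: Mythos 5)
This statement is not proved in the paper; it is quoted verbatim from \cite[Corollary 3]{LT} and used as a black box, so there is no internal argument against which to compare your sketch. That said, your plan is the natural one and almost certainly the one taken in \cite{LT}: realize $f_{rs}$ as the $(1^{rs},0^{n-rs})$-weight-space functor, note that it is exact and commutes with the right place-permutation action of $\sym{s}$, decompose $f_{rs}(V^{\otimes s})$ into weight-space tensor products indexed by ordered partitions of $\{1,\dotsc,rs\}$ into $r$-element blocks, recognize this as induced up from $\sym{r}\wr\sym{s}$, and then cut down by $\omega_s$ using exactness. You have, however, explicitly deferred the one step carrying real content: checking that the identifications $V_{e_A}\cong f_r(V)$ turn the left $\sym{rs}$-action and the right $\sym{s}$-action \emph{simultaneously} into the wreath-product and right-regular structures on $\Ind_{\sym{r}\wr\sym{s}}^{\sym{rs}}\bigl(f_r(V)^{\otimes s}\otimes_{\F}\F\sym{s}\bigr)$. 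This is precisely where an unintended $\tau\leftrightarrow\tau^{-1}$ can slip in, especially given this paper's right-to-left composition convention and its convention $\sigma\cdot(i_1,\dotsc,i_s)=(i_{\sigma^{-1}(1)},\dotsc,i_{\sigma^{-1}(s)})$ for place permutations; the isomorphism you invoke can be realized by $\tau\otimes(m_1\otimes\dotsb\otimes m_s)\mapsto(m_{\tau^{-1}(1)}\otimes\dotsb\otimes m_{\tau^{-1}(s)})\otimes\tau$, but that is a computation to perform, not a citation. You should also say explicitly that the right $\sym{s}$-action on $f_r(V)^{\otimes s}\otimes_{\F}\F\sym{s}$ is trivial on the first factor, since that is what licenses replacing $\F\sym{s}$ by $\F\sym{s}\,\omega_s=\Lie(s)$ and recovers exactly the two actions written in the statement. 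These checks do go through, so your outline is correct, but as written it is a sketch rather than a proof.
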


Bryant and Schocker proved a remarkable decomposition theorem for the Lie powers:

\begin{thm}[{\cite[Theorem 4.4]{BS}}] \label{T:decomp}
Let $k \in \mathbb{Z}^+$ with $p \nmid k$,  and let $V$ be an $n$-dimensional vector space over $\F$.  For each $m \in \mathbb{Z}_{\geq 0}$, there exists $B_{p^m k}(V) \subseteq L^{p^m k}(V)$ such that $B_{p^mk}(V)$ is a
direct summand of $V^{\otimes p^mk}$ as $S(n,p^mk)$-modules, and
$$
L^{p^r k}(V) = L^{p^r}(B_k(V)) \oplus L^{p^{r-1}}(B_{pk}(V)) \oplus \dotsb \oplus L^1(B_{p^rk}(V))
$$
for all $r \in \mathbb{Z}_{\geq 0}$.
\end{thm}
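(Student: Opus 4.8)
This is the principal decomposition theorem of \cite{BS}; a fully self-contained argument is lengthy, so the plan is to outline their strategy, whose backbone is an induction on the $p$-adic valuation $m$ of the degree, together with a structural analysis of the $p$-th Lie power functor. First I would set up the notion of a Lie power of a module: if $W$ is an $S(n,d)$-module occurring as a direct summand of $V^{\otimes d}$, regard $W$ as homogeneous of degree $d$ and put $L^s(W) = W^{\otimes s}\omega_s \subseteq W^{\otimes s} \subseteq V^{\otimes ds}$. Since $(\omega_s)^2 = s\omega_s$, when $p \nmid s$ the element $s^{-1}\omega_s$ is idempotent, so $L^s(W)$ is then a direct summand of $W^{\otimes s}$, hence of $V^{\otimes ds}$. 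This settles the case $m = 0$: take $B_k(V) = L^k(V)$, which is a summand of $V^{\otimes k}$ because $p \nmid k$, and then the asserted identity for $r = 0$ reduces to $L^k(V) = B_k(V)$.

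For the inductive step I would combine two mechanisms. The first is the classical decomposition of the free Lie algebra on a direct sum (Lazard elimination): for $W = \bigoplus_j W_j$ as polynomial modules, $L(W)$ splits functorially, expressing each $L^s(W)$ as a direct sum of Lie powers of the $W_j$ and of free Lie algebras built from tensor products of them, and this remains valid over $\F$ as $\GL(V)$-modules. The second, and essential, input is that for $W$ a direct summand of a tensor power the $p$-th Lie power $L^p(W)$ carries a canonical direct summand coming from the cyclic operator $1 + c + \dotsb + c^{p-1}$ (with $c$ a $p$-cycle permuting the $p$ tensor factors), naturally isomorphic to a Frobenius twist of $W$, and that the pieces so produced are again direct summands of the ambient tensor power. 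Feeding the previously constructed $B_k(V), \dotsc, B_{p^{m-1}k}(V)$ into these two mechanisms, one peels off from $L^{p^m k}(V)$ the summands $L^{p^{m-i}}(B_{p^i k}(V))$ for $i < m$ and defines $B_{p^m k}(V)$ to be the remaining complement inside $L^{p^m k}(V)$; one then verifies, as part of the induction, that this complement is again a direct summand of $V^{\otimes p^m k}$ and that the displayed identity holds for every $r$.

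The hard part, and the technical heart of \cite{BS}, is precisely the requirement that each piece be a direct summand of the tensor power as a module for the Schur algebra (equivalently, as a polynomial $\GL(V)$-module), and not merely as an $\F\sym{p^m k}$-module. Already $L^p(W)$ fails to be a direct summand of $W^{\otimes p}$ over the order-$p$ cyclic group $\langle c\rangle$, so idempotents in symmetric group algebras do not suffice; the extra summands that make the construction work exist only at the level of polynomial functors, and propagating them correctly through the recursion --- in particular controlling the compatibility of the Frobenius-twist summand of $L^p$ with the Lazard decomposition --- is where the real work lies. Everything else is bookkeeping with the recursion.
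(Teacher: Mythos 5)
First, a point of reference: the paper does not prove Theorem \ref{T:decomp} at all --- it is quoted verbatim from Bryant and Schocker \cite{BS} as an external black box, so there is no internal proof to compare yours against. Any assessment therefore has to be of your sketch as a reconstruction of the argument in \cite{BS}.

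Your base case and the appeal to Lazard elimination are consistent with \cite{BS}: for $p \nmid s$ the element $s^{-1}\omega_s$ is idempotent, so $B_k(V) = L^k(V)$ is a summand of $V^{\otimes k}$, and elimination is indeed how mixed degrees are reduced to the pure $p$-power case. But the mechanism you place at the heart of the inductive step is false: $L^p(W)$ does not carry a direct summand naturally isomorphic to the Frobenius twist $W^{(1)}$. A dimension count already rules this out, since $\dim L^p(W) = \bigl((\dim W)^p - \dim W\bigr)/p$, which for $p=2$ and $\dim W = 2$ equals $1$, whereas $\dim W^{(1)} = 2$; concretely, $L^2(V)$ for $\dim V = 2$ is the one-dimensional determinant module for $\GL_2$. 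The Frobenius twist produced by the cyclic operator $1 + c + \dotsb + c^{p-1}$ arises only as a \emph{subquotient} of $W^{\otimes p}$ (fixed points modulo the image of the norm), and the failure of that subquotient to split off is precisely the obstruction that makes modular Lie powers difficult. With that mechanism removed, your plan of peeling off the pieces $L^{p^{m-i}}(B_{p^ik}(V))$ and \emph{defining} $B_{p^mk}(V)$ as ``the remaining complement'' has no engine: the existence of a complement that is simultaneously a direct summand of $V^{\otimes p^mk}$ as an $S(n,p^mk)$-module is the entire content of the theorem, not something one can posit and verify afterwards. In \cite{BS} this is achieved not by decomposing $L^p(W)$ but by a separate and delicate construction (working with free Lie rings over $\mathbb{Z}_{(p)}$ and their key technical results on $L^{pm}$), so your outline has a genuine gap exactly at the step you yourself identify as the technical heart.
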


We note that if $k>1$ and $n\geq p^mk$ then $B_{p^ik}(V)$ is non-zero for $0\leq i\leq m$; this is implicit in \cite{BS}.

The $S(n,p^mk)$-submodules $B_{p^mk}(V)$ of $L^{p^mk}(V)$ are further studied in \cite{BJ} and \cite{BDJ}.  In particular, they give the following description for $B_{p^mk}(V)$.  As mentioned in the beginning of this subsection, $\sym{k}$ acts on $V^{\otimes k}$ from the right by place permutation.  Let $a_k = (1,2,\dotsc, k) \in \sym{k}$.  For each $k$-th root of unity $\delta$ in $\F$ (which is algebraically closed, with characteristic $p$ coprime to $k$), let $(V^{\otimes k})_{\delta}$ denote the $a_k$-eigenspace of $V^{\otimes k}$ with eigenvalue $\delta$.

\begin{thm}[{\cite[Theorem 2.6]{BDJ}}] \label{E:B}
Let $m,k \in \mathbb{Z}_{\geq 0}$ with $k>1$ and $p \nmid k$, and let $V$ be an $n$-dimensional vector space where $n \geq p^mk$.  Then
\begin{equation*}
B_{p^m k}(V) \cong \bigoplus_{(\delta_1,\dotsc,\delta_{p^m}) \in \Omega} (V^{\otimes k})_{\delta_1} \otimes \dotsb \otimes (V^{\otimes k})_{\delta_{p^m}}
\end{equation*}
for some (fixed) non-empty subset $\Omega$ of the set of $p^m$-tuples of $k$-th roots of unity.
\end{thm}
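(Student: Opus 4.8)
The plan is to argue by induction on $m$, exploiting that $p \nmid k$ and transporting the problem to the symmetric group via the Schur functor. Fix the partition of $\{1,\dotsc,p^mk\}$ into $p^m$ consecutive blocks of size $k$, identify $V^{\otimes p^mk}$ with $(V^{\otimes k})^{\otimes p^m}$ accordingly, and set $C = \langle a_k[1], a_k[2], \dotsc, a_k[p^m]\rangle \subseteq \sym{p^mk}$, a $p'$-subgroup that is a direct product of $p^m$ cyclic groups of order $k$. Its right place-permutation action on $V^{\otimes p^mk}$ commutes with the left $S(n,p^mk)$-action, and since $\F C$ is semisimple the simultaneous eigenspace decomposition
$$
V^{\otimes p^mk} = \bigoplus_{(\delta_1,\dotsc,\delta_{p^m})}(V^{\otimes k})_{\delta_1}\otimes\dotsb\otimes(V^{\otimes k})_{\delta_{p^m}},
$$
indexed by $p^m$-tuples of $k$-th roots of unity, is a decomposition of $S(n,p^mk)$-modules. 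Moreover, since $n \geq p^mk$, Schur--Weyl duality makes the Schur functor $f_{p^mk}$ restrict to an equivalence from the category of direct summands of $V^{\otimes p^mk}$ onto the category of projective $\F\sym{p^mk}$-modules: it sends $L^{p^mk}(V)$ to $\Lie(p^mk)$ and sends each summand $(V^{\otimes k})_{\delta_1}\otimes\dotsb\otimes(V^{\otimes k})_{\delta_{p^m}}$ to the monomial module $\Ind_{C}^{\sym{p^mk}}(\chi_{\delta_1}\otimes\dotsb\otimes\chi_{\delta_{p^m}})$, where $\chi_\delta$ is the linear character of $\langle a_k\rangle$ with $\chi_\delta(a_k) = \delta$ (using Theorem~\ref{T:Schur} and the description $(V^{\otimes k})_\delta = V^{\otimes k}e_\delta$ via the primitive idempotent $e_\delta$ of $\F\langle a_k\rangle$). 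Since projective $\F\sym{p^mk}$-modules are determined by the characters of their characteristic-zero lifts, and $B_{p^mk}(V)$ is a direct summand of $V^{\otimes p^mk}$ by Theorem~\ref{T:decomp}, the whole problem reduces to computing the class of $f_{p^mk}(B_{p^mk}(V))$ as a sum of these monomial modules.

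For the base case $m = 0$, Theorem~\ref{T:decomp} gives $B_k(V) = L^k(V)$, whose image under $f_k$ is $\Lie(k)$. By Klyachko's theorem, $\Lie(k) \cong \Ind_{\langle a_k\rangle}^{\sym{k}}(\chi)$ for any faithful linear character $\chi$ of $\langle a_k\rangle$; this remains valid in characteristic $p \nmid k$ because both modules are then projective over $\F\sym{k}$ and their characteristic-zero lifts are isomorphic by the classical case. Hence $L^k(V) \cong (V^{\otimes k})_{\delta_0}$ for any primitive $k$-th root of unity $\delta_0$, and $\Omega = \{\delta_0\}$ works.

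For the inductive step I would feed the inductive hypothesis into the Bryant--Schocker identity of Theorem~\ref{T:decomp} at $r = m$, which presents $B_{p^mk}(V)$ as the complement of $\bigoplus_{i=0}^{m-1}L^{p^{m-i}}(B_{p^ik}(V))$ inside $L^{p^mk}(V)$. Applying $f_{p^mk}$ and Theorem~\ref{T:Schur}, each $f_{p^mk}(L^{p^{m-i}}(B_{p^ik}(V)))$ is the module induced from $\sym{p^ik}\wr\sym{p^{m-i}}$ on $(f_{p^ik}(B_{p^ik}(V)))^{\otimes p^{m-i}}\otimes_{\F}\Lie(p^{m-i})$; the inductive hypothesis (together with the tensor-product form of the Schur-functor isomorphism, also from \cite{LT}) rewrites $f_{p^ik}(B_{p^ik}(V))$ as a sum of monomial modules induced from subgroups of the shape $C$. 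Comparing the class of $\Lie(p^mk)$ with the sum of the contributions for $i < m$ then determines the class of $f_{p^mk}(B_{p^mk}(V))$ as a sum of monomial modules $\Ind_{C}^{\sym{p^mk}}(\chi_{\delta_1}\otimes\dotsb\otimes\chi_{\delta_{p^m}})$, and unwinding the bookkeeping identifies the set $\Omega$ of tuples that occur; $\Omega$ is non-empty because $B_{p^mk}(V) \neq 0$ when $k > 1$ and $n \geq p^mk$, as noted after Theorem~\ref{T:decomp}.

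The main obstacle is this inductive step, for two reasons. First, the modules $\Lie(p^{m-i})$ produced by Theorem~\ref{T:Schur} for $i < m$ are the ``pure $p$-power'' Lie modules, precisely the ones whose structure is not understood, and they are not induced from cyclic subgroups; so one must show that the combinations forced by Theorem~\ref{T:decomp} nonetheless collapse, in the Grothendieck group of projective $\F\sym{p^mk}$-modules, to a sum of the monomial modules $\Ind_{C}^{\sym{p^mk}}(\chi_{\delta_1}\otimes\dotsb\otimes\chi_{\delta_{p^m}})$. Second, identifying $\Omega$ combinatorially, and checking that each admissible tuple contributes with multiplicity at most one, is the real content; for this one very likely needs the explicit construction of the $B$-modules from \cite{BS} and \cite{BJ}, not merely the formal properties recorded in Theorem~\ref{T:decomp}.
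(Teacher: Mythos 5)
This theorem is not proved in the paper at all: it is cited verbatim from Bryant and Johnson \cite[Theorem 2.6]{BDJ}, whose title (``A modular version of Klyachko's theorem\ldots'') confirms that your base-case idea is the right starting point. So there is no in-paper proof to compare against; what follows is an assessment of your outline on its own terms.

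Your reduction and base case are sound. The subgroup $C=\prod_j\langle a_k\rangle[j]$ is a $p'$-group whose right action on $V^{\otimes p^mk}$ commutes with $S(n,p^mk)$, so the simultaneous eigenspace decomposition is an $S(n,p^mk)$-decomposition; since $B_{p^mk}(V)$ is, by Theorem~\ref{T:decomp}, a direct summand of $V^{\otimes p^mk}$, its image under $f_{p^mk}$ is a projective $\F\sym{p^mk}$-module and is determined by its Brauer character, and for $n\geq p^mk$ the Schur functor is an equivalence on summands of $V^{\otimes p^mk}$, so one may work entirely on the symmetric group side. For $m=0$ the identity $B_k(V)=L^k(V)$ together with Klyachko's theorem (valid modulo $p\nmid k$ by lifting projectives) gives $\Omega=\{\delta_0\}$.

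The gap is the one you flag yourself, and it is genuine. In the Bryant--Schocker decomposition at level $p^mk$, the summands $L^{p^{m-i}}(B_{p^ik}(V))$ with $i<m$ are \emph{not} direct summands of $V^{\otimes p^mk}$, so $\Lie(p^mk)$ and the modules $\Ind_{\sym{p^ik}\wr\sym{p^{m-i}}}^{\sym{p^mk}}\bigl((f_{p^ik}B_{p^ik}(V))^{\otimes p^{m-i}}\otimes_\F\Lie(p^{m-i})\bigr)$ for $i<m$ are not projective; only the $i=m$ piece $f_{p^mk}(B_{p^mk}(V))$ is. Consequently there is no Grothendieck group of projective $\F\sym{p^mk}$-modules in which you may simply subtract the $i<m$ contributions from $\Lie(p^mk)$, and any comparison of Brauer characters (or of characters of lifts) only determines $f_{p^mk}(B_{p^mk}(V))$ up to the class of its complement among the non-projective terms, which you cannot compute by induction on $m$ since $\Lie(p^{m-i})$ for $0<i<m$ is not controlled by the statement being proved. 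To close this, one would need the explicit module-theoretic construction of $B_{p^mk}(V)$ from \cite{BS} and \cite{BJ} (not just the existence statement in Theorem~\ref{T:decomp}), which is exactly how \cite{BDJ} proceeds. As it stands, your argument establishes the shape of the answer but not the theorem.
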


We note that $(V^{\otimes k})_{\delta} \cong V^{\otimes k} \otimes_{\F \left< a_k \right>} \F_{\delta}$ as left $S(n,k)$-modules, where $\F_{\delta}$ denotes the one-dimensional left $\F\left<a_k\right>$-module in which $a_k$ acts via multiplication by the scalar $\delta$.

\begin{cor} \label{C:B}
Keep the notations in Theorem \ref{E:B}.  Then
$$
f_{p^mk}(B_{p^mk}) \cong \bigoplus_{(\delta_1,\dotsc,\delta_{p^m}) \in \Omega} \Ind_{\left<a_k \right>^{p^m}}^{\sym{p^mk}} \left(\bigotimes_{j=1}^{p^m} \F_{\delta_j} \right),
$$
where $\left<a_k\right>^{p^m}$ is identified with the subgroup $\prod_{j=1}^{p^m} \left<a_k\right>[j]$ of $\sym{p^mk}$.

In particular, $f_{p^mk}(B_{p^mk})$ is a non-zero $\F\sym{p^mk}$-module induced from $\left<a_k\right>^{p^m}$.
\end{cor}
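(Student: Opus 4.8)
The plan is to apply the Schur functor $f_{p^mk}$ (available since $n\geq p^mk$) directly to the decomposition of $B_{p^mk}(V)$ supplied by Theorem \ref{E:B}, using three standard facts about $f_{p^mk}$: it is exact, being given by left multiplication by an idempotent of $S(n,p^mk)$; it carries $V^{\otimes p^mk}$ to the $(\F\sym{p^mk},\F\sym{p^mk})$-bimodule $\F\sym{p^mk}$, with the place-permutation right action going over to right multiplication; and, by the Note following Theorem \ref{E:B}, $(V^{\otimes k})_\delta \cong V^{\otimes k}\otimes_{\F\left<a_k\right>}\F_\delta$ as left $S(n,k)$-modules.

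First I would rewrite the summands appearing in Theorem \ref{E:B}. Since forming tensor products over $\F$ is compatible with the base-change isomorphism $\bigotimes_j (A_j\otimes_{B_j} C_j)\cong \bigl(\bigotimes_j A_j\bigr)\otimes_{\bigotimes_j B_j}\bigl(\bigotimes_j C_j\bigr)$, the Note gives, as left $S(n,p^mk)$-modules,
$$
(V^{\otimes k})_{\delta_1}\otimes_{\F}\dotsb\otimes_{\F}(V^{\otimes k})_{\delta_{p^m}}\;\cong\; V^{\otimes p^mk}\otimes_{\F\left<a_k\right>^{p^m}}\Bigl(\bigotimes_{j=1}^{p^m}\F_{\delta_j}\Bigr),
$$
where $\left<a_k\right>^{p^m}=\prod_{j=1}^{p^m}\left<a_k\right>[j]$ acts on $V^{\otimes p^mk}$ on the right by place permutation, the $j$-th factor permuting the $j$-th block of $k$ tensor slots. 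Next I would apply $f_{p^mk}$: since it is left multiplication by an idempotent it commutes with $-\otimes_{\F\left<a_k\right>^{p^m}}\bigl(\bigotimes_j\F_{\delta_j}\bigr)$, and since it sends $V^{\otimes p^mk}$ to $\F\sym{p^mk}$ as a bimodule — with place permutation going to right multiplication, so that restriction to $\left<a_k\right>^{p^m}$ is right multiplication by the subgroup $\prod_j\left<a_k\right>[j]$ of $\sym{p^mk}$ — one obtains
$$
f_{p^mk}\bigl((V^{\otimes k})_{\delta_1}\otimes_{\F}\dotsb\otimes_{\F}(V^{\otimes k})_{\delta_{p^m}}\bigr)\;\cong\;\F\sym{p^mk}\otimes_{\F\left<a_k\right>^{p^m}}\Bigl(\bigotimes_{j=1}^{p^m}\F_{\delta_j}\Bigr)\;=\;\Ind_{\left<a_k\right>^{p^m}}^{\sym{p^mk}}\Bigl(\bigotimes_{j=1}^{p^m}\F_{\delta_j}\Bigr).
$$

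Finally, exactness of $f_{p^mk}$ lets it pass through the finite direct sum over $\Omega$ in Theorem \ref{E:B}, yielding the displayed isomorphism; additivity of $\Ind_{\left<a_k\right>^{p^m}}^{\sym{p^mk}}(-)$ then shows $f_{p^mk}(B_{p^mk})$ is induced from $\left<a_k\right>^{p^m}$, and it is non-zero because $\Omega\neq\varnothing$ by Theorem \ref{E:B} while each $\bigotimes_j\F_{\delta_j}$ is one-dimensional. I expect the only delicate point to be the bookkeeping in the middle step: verifying that the right $\F\left<a_k\right>^{p^m}$-module structure on $V^{\otimes p^mk}$ read off from the eigenspace description is carried, under the Schur functor's identification $f_{p^mk}(V^{\otimes p^mk})\cong\F\sym{p^mk}$, to genuine right multiplication by the subgroup $\prod_j\left<a_k\right>[j]$ of $\sym{p^mk}$, so that the tensor product over it really is the asserted induced module.
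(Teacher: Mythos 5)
Your argument is correct, and it reaches the conclusion by a route that is genuinely different in its middle step from the paper's. The paper first invokes the lemma of Donkin and Erdmann (\cite[2.5, Lemma]{DE}), which converts the Schur functor of an outer tensor product of $S(n,k)$-modules into $\Ind_{(\sym{k})^{p^m}}^{\sym{p^mk}}$ applied to the tensor product of the $f_k$'s of the factors; it then identifies each $f_k(V^{\otimes k}\otimes_{\F\left<a_k\right>}\F_{\delta_j})$ with $\Ind_{\left<a_k\right>}^{\sym{k}}\F_{\delta_j}$ and finishes by transitivity of induction through the Young subgroup $(\sym{k})^{p^m}$. You instead bypass that citation entirely: you assemble the whole summand as $V^{\otimes p^mk}\otimes_{\F\left<a_k\right>^{p^m}}\bigl(\bigotimes_j\F_{\delta_j}\bigr)$ and use that $f_{p^mk}$, being multiplication by an idempotent acting on the left, commutes with $-\otimes_{\F\left<a_k\right>^{p^m}}(\cdot)$, together with the bimodule identification $f_{p^mk}(V^{\otimes p^mk})\cong\F\sym{p^mk}$ taking right place permutation to right multiplication. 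The delicate point you flag is exactly the content that the Donkin--Erdmann lemma packages for the paper; your version handles it directly and is self-contained, at the cost of having to verify the bimodule bookkeeping yourself (which does check out, since the right action of $\left<a_k\right>[j]$ by place permutation on the $j$-th block of $k$ slots is precisely the action of $a_k$ on the $j$-th tensor factor, and $p\nmid k$ ensures the eigenspace and the coinvariant descriptions of $(V^{\otimes k})_\delta$ agree). Both proofs establish non-vanishing the same way, from $\Omega\neq\varnothing$.
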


\begin{proof}
By Theorem \ref{E:B} and \cite[2.5, Lemma]{DE}, we have
\begin{align*}
f_{p^mk}(B_{p^mk}(V)) &\cong \bigoplus_{(\delta_1,\dotsc,\delta_{p^m}) \in \Omega} \Ind_{(\sym{k})^{p^m}}^{\sym{p^mk}} \left(\bigotimes_{j=1}^{p^m} f_k(V^{\otimes k} \otimes_{\F\left<a_k \right>} \F_{\delta_j})\right) \\
&\cong \bigoplus_{(\delta_1,\dotsc,\delta_{p^m}) \in \Omega} \Ind_{(\sym{k})^{p^m}}^{\sym{p^mk}} \left(\bigotimes_{j=1}^{p^m} \F\sym{k} \otimes_{\F\left<a_k \right>} \F_{\delta_j}\right) \\
&= \bigoplus_{(\delta_1,\dotsc,\delta_{p^m}) \in \Omega} \Ind_{(\sym{k})^{p^m}}^{\sym{p^mk}} \left(\bigotimes_{j=1}^{p^m} \Ind_{\left< a_k\right>}^{\sym{k}} \F_{\delta_j}\right) \\
&= \bigoplus_{(\delta_1,\dotsc,\delta_{p^m}) \in \Omega} \Ind_{\left< a_k \right>^{p^m}}^{\sym{p^mk}} \left(\bigotimes_{j=1}^{p^m} \F_{\delta_j}\right).
\end{align*}
\end{proof}

\section{Main results} \label{S:main}

In this section, we prove the main results of this paper.

\begin{lem} \label{L:simple}
Let $n \in \mathbb{Z}^+$, and let $(r_1,r_2,\dotsc, r_t)$ be a weakly decreasing sequence of positive integers such that $\sum_{i=1}^t p^{r_i} = p\lfloor n/p \rfloor$.  For each $j = 1,\dotsc, t$, let $s_j = \sum_{i=1}^j p^{r_i}$.  Let $E = \prod_{j=1}^t \E_{r_j}[s_j/p^{r_j}]$.  Then
$$
c_E(\Lie(n))
\begin{cases}
= 0, &\text{if } p \nmid n; \\
\leq r_t, &\text{if } p \mid n.
\end{cases}
$$
\end{lem}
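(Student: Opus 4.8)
My plan is to deduce both assertions from Lemma~\ref{L:free} together with Lemma~\ref{L:}, the only real input being the bookkeeping of supports supplied by the Note after Theorem~\ref{T:eleab}. The fact used throughout is that the restriction of a free module to a subgroup is free: since $\F\sym{n-1}$ is free as an $\F H$-module for any subgroup $H \leq \sym{n-1}$, Lemma~\ref{L:free} shows $\Lie(n)|_{\F H}$ is free, hence projective, so $c_H(\Lie(n)) = 0$ by Theorem~\ref{T:cplx}(1).

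First I would treat the case $p \nmid n$. By the Note after Theorem~\ref{T:eleab}, the factors $\E_{r_j}[s_j/p^{r_j}]$ have disjoint supports whose union is $\{1,2,\dotsc, s_t\}$ with $s_t = p\lfloor n/p\rfloor$. As $p\nmid n$, $s_t \leq n-1$, so $E \leq \sym{s_t} \leq \sym{n-1}$, and the observation above gives $c_E(\Lie(n)) = 0$.

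Next I would treat $p \mid n$, so that $s_t = n$. Split $E = E_1 \times E_2$ with $E_2 = \E_{r_t}[s_t/p^{r_t}]$ (the factor with the \emph{smallest} $r_j$, which is why we obtain the bound $r_t$) and $E_1 = \prod_{j=1}^{t-1}\E_{r_j}[s_j/p^{r_j}]$, the trivial group if $t = 1$. Then $E_2 \cong \E_{r_t}\cong (C_p)^{r_t}$, and $E_1$ is supported on $\{1,\dotsc, s_{t-1}\}$ where $s_{t-1} = s_t - p^{r_t} = n - p^{r_t} \leq n - p \leq n - 1$ because $r_t \geq 1$. Hence $E_1 \leq \sym{s_{t-1}} \leq \sym{n-1}$, so $\Lie(n)$ is free, in particular projective, as an $\F E_1$-module. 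Lemma~\ref{L:} (with $s = r_t$) then yields $c_E(\Lie(n)) \leq r_t$.

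I do not expect a serious obstacle here: once Lemmas~\ref{L:free} and~\ref{L:} are available, the argument amounts to choosing the right decomposition $E = E_1 \times E_2$ and checking that $E_1$ fixes the point $n$ (which needs only $p^{r_t} \geq p \geq 2$). The one point demanding minor care is the degenerate case $t = 1$: there $E_1$ is trivial, $\Lie(n)$ is trivially projective over $\F E_1 = \F$, and Lemma~\ref{L:} degenerates to the rank-variety bound $c_{E_2}(\Lie(n)) \leq r_t$.
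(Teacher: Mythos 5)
Your proof is correct and follows essentially the same route as the paper's: both deduce the $p\nmid n$ case by observing $E\subseteq\sym{n-1}$ and applying Lemma~\ref{L:free}, and both handle $p\mid n$ by peeling off the last factor $\E_{r_t}[n/p^{r_t}]$, noting the remaining product lies in $\sym{n-1}$, and invoking Lemma~\ref{L:}. Your additional remarks about supports and the degenerate case $t=1$ are fine but not strictly necessary.
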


\begin{proof}
If $p \nmid n$, then $E \subseteq \sym{n-1}$. Since $\Lie(n)$ is free of rank 1 as an $\F\sym{n-1}$-module by Lemma \ref{L:free}, we see that $\Lie(n)$ is projective as an $\F E$-module.  Thus $c_E(\Lie(n)) = 0$ by Theorem \ref{T:cplx}(1).

If $p \mid n$, let $E' = \prod_{j=1}^{t-1} \E_{r_j} [s_j/p^{r_j}]$, so that $E = E' \times \E_{r_t}[n/p^{r_t}]$.  Since $E' \subseteq \sym{n-1}$, we see, as before, that $\Lie(n)$ is projective as an $\F E'$-module, so that $c_{E'}(\Lie(n)) = 0$.  Thus $c_E(\Lie(n)) \leq r_t$ by Lemma \ref{L:}.
\end{proof}

\begin{thm} \label{T:ineq}
We have $c_{\sym{n}}(\Lie(n)) \leq m$, where $p^m \mid n$ and $p^{m+1} \nmid n$.
\end{thm}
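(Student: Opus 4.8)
The plan is to compute the complexity by restricting to maximal elementary Abelian $p$-subgroups and invoking their explicit classification in $\sym{n}$. By Theorem \ref{T:cplx}(2), $c_{\sym{n}}(\Lie(n))$ equals the maximum of $c_E(\Lie(n))$ as $E$ ranges over a set of representatives of the conjugacy classes of maximal elementary Abelian $p$-subgroups of $\sym{n}$, so it suffices to bound each $c_E(\Lie(n))$ by $m$.

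By Theorem \ref{T:eleab}, every such $E$ is conjugate to one of the form $\prod_{j=1}^t \E_{r_j}[s_j/p^{r_j}]$, where $(r_1,\dotsc,r_t)$ is a weakly decreasing sequence of positive integers with $\sum_{i=1}^t p^{r_i} = p\lfloor n/p \rfloor$ and $s_j = \sum_{i=1}^j p^{r_i}$. Lemma \ref{L:simple} treats exactly this configuration: it gives $c_E(\Lie(n)) = 0$ when $p \nmid n$, and $c_E(\Lie(n)) \leq r_t$ when $p \mid n$. In particular, when $p \nmid n$ we have $m = 0$ and the assertion is immediate.

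So the remaining point, when $p \mid n$, is to check that $r_t \leq m$. Since the sequence is weakly decreasing, $r_t = \min_i r_i$, hence $p^{r_t}$ divides every $p^{r_i}$ and therefore divides $\sum_{i=1}^t p^{r_i} = p\lfloor n/p \rfloor = n$; as $p^{m+1} \nmid n$ this forces $r_t \leq m$. Combining, $c_E(\Lie(n)) \leq r_t \leq m$ for every $E$ in our set of representatives, and therefore $c_{\sym{n}}(\Lie(n)) \leq m$.

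I do not expect a genuine obstacle: the substantive work has been front-loaded into Lemma \ref{L:simple} (which itself rests on Lemmas \ref{L:free} and \ref{L:} and Theorem \ref{T:cplx}) and the classification Theorem \ref{T:eleab}. The only things requiring care are the elementary divisibility argument bounding the smallest part of the relevant composition of $n$ by $m$, and keeping the two roles of the symbol ``$m$'' apart — the length of the sequence in Theorem \ref{T:eleab} versus the $p$-adic valuation of $n$ in the statement — for which I write $t$ for the former throughout.
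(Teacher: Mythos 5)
Your proposal is correct and follows the paper's own proof essentially verbatim: reduce to maximal elementary Abelian $p$-subgroups via Theorem \ref{T:cplx}(2) and Theorem \ref{T:eleab}, apply Lemma \ref{L:simple}, and observe that $p^{r_t}$ divides $\sum_{i=1}^t p^{r_i} = n$ so $r_t \leq m$. Your remark about keeping the index $t$ distinct from the valuation $m$ matches the notational adjustment the paper itself makes in this proof.
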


\begin{proof}
By Theorems \ref{T:cplx}(2) and \ref{T:eleab}, it suffices to show that $c_E(\Lie(n)) \leq m$ for all $E$ of the form
$$E = \prod_{j=0}^t \E_{r_j} [s_j/p^{r_j}]$$
where $(r_1,r_2,\dotsc, r_t)$ is a weakly decreasing sequence of positive integers such that $\sum_{i=1}^t p^{r_i} = p\lfloor n/p \rfloor$ and $s_j = \sum_{i=1}^j p^{r_i}$.

If $p \nmid n$ (i.e.\ $m = 0$), then $c_E(\Lie(n)) = 0 = m$ by Lemma \ref{L:simple}.

If $p \mid n$, then $c_E(\Lie(n)) \leq r_t$ by Lemma \ref{L:simple}.
Since $\sum_{i=1}^t p^{r_i} = n$ and $(r_1,\dotsc, r_t)$ is weakly decreasing, we see that $p^{r_t} \mid n$.  Thus $r_t \leq m$.
\end{proof}

\begin{thm} \label{T:p-power}
Let $m, k \in \mathbb{Z}^+$ with $p \nmid k$ and $k>1$.  Then
$$
c_{\sym{p^mk }}(\Lie(p^mk)) = \max\{c_{\sym{p^i}}(\Lie(p^i)) \mid 1 \leq i \leq m \}.
$$
\end{thm}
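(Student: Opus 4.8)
The plan is to push the Bryant--Schocker decomposition of Lie powers through the Schur functor and then read off the complexity of each resulting summand using Proposition \ref{P:}, which is exactly tailored to modules of the shape that will appear.

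\emph{Step 1: decompose $\Lie(p^mk)$.} Since $c_{\sym{p^mk}}(\Lie(p^mk))$ is an intrinsic invariant, I would fix an $n$-dimensional space $V$ with $n=p^mk$, so that $\Lie(p^mk)\cong f_{p^mk}(L^{p^mk}(V))$. Applying Theorem \ref{T:decomp} with $r=m$ gives $L^{p^mk}(V)=\bigoplus_{i=0}^{m}L^{p^i}(B_{p^{m-i}k}(V))$ as $S(n,p^mk)$-modules, and additivity of $f_{p^mk}$ yields $\Lie(p^mk)\cong\bigoplus_{i=0}^{m}f_{p^mk}(L^{p^i}(B_{p^{m-i}k}(V)))$ as $\F\sym{p^mk}$-modules. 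By Theorem \ref{T:cplx}(4) it then suffices to compute the complexity of each summand.

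\emph{Step 2: identify the $i$th summand.} Fix $0\leq i\leq m$. Since $B_{p^{m-i}k}(V)$ is a direct summand of $V^{\otimes p^{m-i}k}$ it is an $S(n,p^{m-i}k)$-module, and $n=p^{m-i}k\cdot p^i$; so Theorem \ref{T:Schur} (with $r=p^{m-i}k$, $s=p^i$) gives
$$f_{p^mk}(L^{p^i}(B_{p^{m-i}k}(V)))\cong \Ind_{\sym{p^{m-i}k}\wr\sym{p^i}}^{\sym{p^mk}}\bigl(N_i^{\otimes p^i}\otimes_{\F}\Lie(p^i)\bigr),$$
where $N_i:=f_{p^{m-i}k}(B_{p^{m-i}k}(V))$, with $\sym{p^{m-i}k}\wr\sym{p^i}$ acting on $N_i^{\otimes p^i}$ by the natural wreath action and on $\Lie(p^i)$ by inflation from $\sym{p^i}$. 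By Theorem \ref{T:cplx}(3) this summand has complexity $c_{\sym{p^{m-i}k}\wr\sym{p^i}}(N_i^{\otimes p^i}\otimes_{\F}\Lie(p^i))$. Now Corollary \ref{C:B} (with $m$ replaced by $m-i$; its hypotheses $k>1$, $p\nmid k$, $n\geq p^{m-i}k$ all hold) shows $N_i\cong\Ind_{\left<a_k\right>^{p^{m-i}}}^{\sym{p^{m-i}k}}M_i$ for a non-zero module $M_i$, where $\left<a_k\right>^{p^{m-i}}$ is an Abelian subgroup of $\sym{p^{m-i}k}$ of order $k^{p^{m-i}}$, prime to $p$ since $p\nmid k$. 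Proposition \ref{P:}, applied with $K=\sym{p^{m-i}k}$, $G=\left<a_k\right>^{p^{m-i}}$, $N=N_i$, $n=p^i$, $S=\Lie(p^i)$, then gives $c_{\sym{p^{m-i}k}\wr\sym{p^i}}(N_i^{\otimes p^i}\otimes_{\F}\Lie(p^i))=c_{\sym{p^i}}(\Lie(p^i))$, so the $i$th summand of $\Lie(p^mk)$ has complexity $c_{\sym{p^i}}(\Lie(p^i))$.

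\emph{Step 3: conclude.} For $i=0$ we have $\sym{p^0}=\sym{1}$ trivial and $\Lie(1)=\F$, so that summand is projective; for $1\leq i\leq m$ the summand has complexity $c_{\sym{p^i}}(\Lie(p^i))\geq 0$. Since $m\geq 1$, Theorem \ref{T:cplx}(4) gives $c_{\sym{p^mk}}(\Lie(p^mk))=\max_{0\leq i\leq m}c_{\sym{p^i}}(\Lie(p^i))=\max_{1\leq i\leq m}c_{\sym{p^i}}(\Lie(p^i))$. I expect the main work to be purely organisational: the real content is already packed into Proposition \ref{P:} and Corollary \ref{C:B}, and the delicate points are (i) checking the hypotheses of Theorems \ref{T:Schur}, \ref{E:B} and Corollary \ref{C:B} for each $i$ — in particular that $\left<a_k\right>^{p^{m-i}}$ is a $p'$-group (this is where $p\nmid k$ is used) and that $M_i\neq 0$ (this is where $k>1$ and the non-vanishing of $B_{p^{m-i}k}(V)$ remarked after Theorem \ref{T:decomp} are used), and (ii) noting that we obtain an honest equality, not merely an upper bound, precisely because the decomposition pins down each summand's complexity exactly.
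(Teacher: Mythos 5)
Your proof is correct and follows essentially the same route as the paper's: apply the Bryant--Schocker decomposition (Theorem \ref{T:decomp}), push through the Schur functor via Theorem \ref{T:Schur}, identify each summand's complexity via Corollary \ref{C:B} and Proposition \ref{P:}, and conclude with Theorem \ref{T:cplx}(3,4). You are somewhat more explicit than the paper in verifying the hypotheses at each stage (in particular that $\left<a_k\right>^{p^{m-i}}$ is a $p'$-group and that the relevant $B$-modules are non-zero, which is where $k>1$ enters), and in handling the harmless $i=0$ summand; this is sound and matches what the paper leaves implicit.
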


\begin{proof}
By applying the exact and direct-sum-preserving Schur functor $f_{p^mk}$ to Theorem \ref{T:decomp}, we obtain, by Theorem \ref{T:Schur},
$$
\Lie(p^m k) = \bigoplus_{i=0}^m \Ind_{\sym{p^{m-i}k} \wr \sym{p^i}}^{\sym{p^mk}} ((f_{p^{m-i}k}(B_{p^{m-i}k}(V)))^{\otimes p^i} \otimes_{\F} \Lie(p^i)).
$$
By Corollary \ref{C:B} and Proposition \ref{P:},
$$c_{\sym{p^{m-i}k} \wr \sym{p^i}}((f_{p^{m-i}k}(B_{p^{m-i}k}(V)))^{\otimes p^i} \otimes_{\F} \Lie(p^i)) = c_{\sym{p^i}}(\Lie(p^i)).$$
Applying Theorem \ref{T:cplx}(3,4) now completes the proof.
\end{proof}

}

We conjecture that the inequality in Theorem \ref{T:ineq} is in fact an equality.  This assertion is equivalent to the following statements:

\begin{cor}
The following statements are equivalent:
\begin{enumerate}
\item For all $n \in \mathbb{Z}^+$, $c_{\sym{n}}(\Lie(n)) = m$ where $p^m \mid n$ and $p^{m+1} \nmid n$.
\item For all $m \in \mathbb{Z}^+$, $c_{\sym{p^m}}(\Lie(p^m)) = m$.
\item For all $m \in \mathbb{Z}^+$, $c_{\E_m}(\Lie(p^m)) = m$.
\item For all $m \in \mathbb{Z}^+$, $V^{\#}_{\E_m}(\Lie(p^m)) = \F^m$.
\end{enumerate}
\end{cor}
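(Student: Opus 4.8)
The plan is to prove the equivalences $(1)\Leftrightarrow(2)$, $(2)\Leftrightarrow(3)$ and $(3)\Leftrightarrow(4)$ in turn. The implication $(1)\Rightarrow(2)$ is just the case $n=p^m$ of $(1)$. For $(2)\Rightarrow(1)$, recall that Theorem \ref{T:ineq} already gives $c_{\sym{n}}(\Lie(n))\leq m$ whenever $p^m\mid n$ and $p^{m+1}\nmid n$, so only the reverse inequality is at issue. Writing $n=p^mk$ with $p\nmid k$: the case $m=0$ is immediate, the case $k=1$ is exactly $(2)$, and if $m\geq1$ and $k>1$ then Theorem \ref{T:p-power} gives $c_{\sym{n}}(\Lie(n))=\max\{c_{\sym{p^i}}(\Lie(p^i))\mid 1\leq i\leq m\}$, which equals $m$ by $(2)$.

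For $(2)\Leftrightarrow(3)$ I would combine Theorem \ref{T:cplx}(2) with the classification in Theorem \ref{T:eleab}: $c_{\sym{p^m}}(\Lie(p^m))$ is the maximum of $c_E(\Lie(p^m))$ over the subgroups $E=\prod_{j=1}^t\E_{r_j}[s_j/p^{r_j}]$ arising there, where $(r_1,\dotsc,r_t)$ is weakly decreasing with $\sum_{i=1}^t p^{r_i}=p^m$. Since $p\mid p^m$, Lemma \ref{L:simple} bounds each $c_E(\Lie(p^m))$ by $r_t$. Now if $t\geq2$ then $p^m=\sum_{i=1}^t p^{r_i}\geq 2p^{r_t}$, forcing $r_t\leq m-1$, while $t=1$ gives $r_1=m$ and $E=\E_m$. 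Hence
$$
c_{\sym{p^m}}(\Lie(p^m))=\max\bigl\{\,c_{\E_m}(\Lie(p^m)),\ c'\,\bigr\}
$$
with $c'\leq m-1$; together with $c_{\E_m}(\Lie(p^m))\leq m$ (as $\E_m\cong(C_p)^m$) and $c_{\sym{p^m}}(\Lie(p^m))\leq m$ from Theorem \ref{T:ineq}, this display yields both implications at once: $(3)$ forces the maximum to be $m$, and if the maximum is $m$ then, since $c'<m$, the term $c_{\E_m}(\Lie(p^m))$ must be $m$.

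Finally, $(3)\Leftrightarrow(4)$ follows from the identification of the cohomological variety with the rank variety, i.e.\ the isomorphism of affine varieties $V_{\E_m}(\Lie(p^m))\cong V^{\#}_{\E_m}(\Lie(p^m))$, so that $c_{\E_m}(\Lie(p^m))=\dim V^{\#}_{\E_m}(\Lie(p^m))$, together with the fact that a closed subvariety of the irreducible variety $\F^m$ has dimension $m$ if and only if it is all of $\F^m$. The argument as a whole is an assembly of earlier results; the only step that asks for a genuine (if small) observation rather than a citation is the point in $(2)\Leftrightarrow(3)$ that, among the distinguished maximal elementary Abelian $p$-subgroups of $\sym{p^m}$, only $\E_m$ escapes the bound $m-1$ coming from Lemma \ref{L:simple} — and that is forced by the arithmetic of writing $p^m$ as a sum of a weakly decreasing sequence of $p$-powers. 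I therefore anticipate no serious obstacle.
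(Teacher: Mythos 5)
Your proposal is correct and follows essentially the same route the paper takes, namely citing Theorem \ref{T:p-power} for $(1)\Leftrightarrow(2)$, Theorem \ref{T:cplx}(2) together with Lemma \ref{L:simple} (and the classification of maximal elementary Abelian $p$-subgroups) for $(2)\Leftrightarrow(3)$, and the rank/cohomological variety isomorphism for $(3)\Leftrightarrow(4)$. You merely spell out the details the paper leaves implicit — in particular the case split for $(2)\Rightarrow(1)$ since Theorem \ref{T:p-power} requires $k>1$, and the arithmetic observation that any decomposition of $p^m$ into two or more $p$-powers forces $r_t\leq m-1$, so only $\E_m$ can attain complexity $m$.
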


\begin{proof}
(1) and (2) are equivalent by Therorem \ref{T:p-power}, while the equivalence of (2) and (3) follows from Theorem \ref{T:cplx}(2) and Lemma \ref{L:simple}.  That (3) and (4) are equivalent is trivial.
\end{proof}

\section{Some examples} \label{S:eg}

We end the paper with the computation of the complexity of some $\Lie(n)$.  This provides some evidence in support of our conjecture that the inequality in Theorem \ref{T:ineq} is in fact an equality.

\subsection{The case $n=pk$ where $p$ does not divide $k$} By \cite{ES},
any non-projective summand of $\Lie(n)$ has vertex of order $p$ and is therefore
periodic as a module for $\F\sym{n}$. Furthermore such a summand always exists, and hence
$c_{\sym{n}}(\Lie(n))=1$ in this case.


\subsection{The case when $n=2^m$ with $m=2, 3$ and $p=2$}

When $n=8$, the results in \cite{D-al}, which were obtained with the help of computer calculations, can be used to find the complexity.
Recall that any finite-dimensional module $M$ is a direct sum
$M= M^{pf}\oplus M^{pr}$ where $M^{pr}$ is projective, and $M^{pf}$ does
not have a non-zero projective summand. Clearly, $c(M)= c(M^{pf})$.
The projective-free part  $\Lie(8)^{pf}$ of $\Lie(8)$
is indecomposable, with the regular
elementary Abelian subgroup $\E_3$ of order $8$ as its vertex, and a source of dimension $21$.

Generally, if the projective-free part $M^{pf}$ is indecomposable
and has an elementary Abelian vertex $E$ of order $p^m$
 then $V_E^{\#}(M) = V_E^{\#}(S)$ where
$S$ is a source of $M$ in $E$. If $p$ does not divide $\dim_{\F}(S)$, then clearly
$V_E^{\#}(S) = \F^m$. 
Hence $\Lie(8)$ has complexity $3$.

When $n=4$,
it is easy to see that $\Lie(4)$ is isomorphic to $\Omega^{-1}(D)$ where
$D$ is the two-dimensional simple module of $\F\sym{4}$. This has vertex the
regular elementary Abelian subgroup $\E_2$ of order 4, and its source has dimension
$3$. Hence the same argument as for $n=8$ implies that $c_{\sym{4}}(\Lie(4))=2$.

\subsection{The case when $n=9$ and $p=3$}

In this case, there are similar results by \cite{D-al}. Again, $\Lie(9)^{pf}$
is indecomposable, with a vertex the regular elementary Abelian subgroup $\E_2$
of order $9$, and a source of dimension $16$.  The above argument can still be applied to get $c_{\sym{9}}(\Lie(9))=2$.


\end{document}